\newtheorem{thm}{Theorem}[section]
\newtheorem{prop}[thm]{Proposition}
\newtheorem{lem}[thm]{Lemma}
\newtheorem{cor}[thm]{Corollary}
\numberwithin{equation}{section}
\theoremstyle{definition}
\newtheorem{definition}[thm]{Definition}
\newtheorem{remark}[thm]{Remark}
\renewenvironment{proof}{\par\vspace{-5pt}%
\par\noindent\begingroup%
\leftskip=0em\hspace{0em}{\bf Proof.}}%
{\endgroup\hfill$\Box$}
\DeclareMathOperator{\im}{im} % image of a morphism
\newcommand{\iso}{\cong}
\newcommand{\niso}{\ncong}
\newcommand{\farg}{-} % argument of a functor
\newcommand{\id}{\mathrm{id}}
\newcommand{\comp}{\circ} % composition
\newcommand{\mor}[1]{\xrightarrow{#1}}
\newcommand{\isomor}{\mor{\sim}} % isomorphism
\newcommand{\rest}[1]{|_{#1}} % restriction of function
\newcommand{\cp}[1]{#1^{\bullet}} % (cohomological) complex
\newcommand{\K}{\Bbbk} % field K
\newcommand{\cat}[1]{{\mathbf{#1}}} % category
\newcommand{\s}[1]{\mathcal{#1}} % sheaf (over a topological space)
\newcommand{\so}{\s{O}} % structure sheaf O
\newcommand{\Hom}{\mathrm{Hom}}
\newcommand{\End}{\mathrm{End}}
\newcommand{\Spec}{\mathrm{Spec}}
\newcommand{\cone}[1]{\mathrm{C}(#1)} % cone of a morphism
\newcommand{\rort}[1]{#1^{\perp}} % right orthogonal
\newcommand{\lort}[1]{{}^{\perp}#1} % left orthogonal
\newcommand{\sh}[2][1]{#2[#1]} % shift functor
\newcommand{\FM}[2][]{\Phi^{#1}_{#2}} % Fourier-Mukai functor
\newcommand{\D}[1][]{\mathbf{D}^{#1}} % derived category
\newcommand{\Db}{\mathrm{D}^b} % bounded derived category
\newcommand{\Dp}{\cat{Perf}} % category of perfect complexes
\newcommand{\rd}{\mathbf{R}} % right derived
\newcommand{\ld}{\mathbf{L}} % left derived
\newcommand{\lotimes}{\overset{\ld}{\otimes}} % der. fun. of tensor product
\newcommand{\fun}[1]{\mathsf{#1}} % functor
\newcommand{\Mod}[1]{\cat{Mod}\text{-}#1} % category of modules
\newcommand{\Coh}{\cat{coh}}
\newcommand{\Qcoh}{\cat{Qcoh}}
\newcommand{\ortdec}{\oplus} % orthogonal decomposition
\newcommand{\hocolim}{\underrightarrow{\operatorname{hocolim}}}
\newcommand{\inc}{\iota} % inclusion (in a direct sum)
\newcommand{\ch}{\mathrm{ch}}
\newcommand{\cal}{\mathcal}
\newcommand{\ka}{{\cal A}}
\newcommand{\ke}{{\cal E}}
\newcommand{\ko}{{\cal O}}
\newcommand{\kp}{{\cal P}}
\newcommand{\NN}{\mathbb{N}}
\newcommand{\ZZ}{\mathbb{Z}}
\newcommand{\QQ}{\mathbb{Q}}
\newcommand{\CC}{\mathbb{C}}
\begin{document}

	\title[Does full imply faithful?]{Does full imply faithful?}

    \author{Alberto Canonaco}
    \author{Dmitri Orlov}
    \author{Paolo Stellari}

	\address{A.C.: Dipartimento di Matematica ``F. Casorati'', Universit{\`a}
	degli Studi di Pavia, Via Ferrata 1, 27100 Pavia, Italy}
	\email{alberto.canonaco@unipv.it}

	\address{D.O.: Algebra Section, Steklov Mathematical Institute RAS, 8 Gubkin Str., Moscow 119991, Russia}
	\email{orlov@mi.ras.ru}

	\address{P.S.: Dipartimento di Matematica ``F.
	Enriques'', Universit{\`a} degli Studi di Milano, Via Cesare Saldini
	50, 20133 Milano, Italy}
	\email{paolo.stellari@unimi.it}

	\keywords{Derived categories, triangulated categories, exact functors}

	\thanks{D.O.\ was partially supported by  RFFI grants
	08-01-00297, 10-01-93113,  NSh grant 4713.2010.1, and
	by AG Laboratory HSE, RF government
	grant, ag. 11.G34.31.0023. P.S.\ was partially supported by the
	MIUR of the Italian Government in the framework of the National Research Project ``Geometria algebrica e aritmetica, teorie coomologiche e teoria dei motivi'' (PRIN 2008).}

	\subjclass[2000]{14F05, 18E10, 18E30}
	
\begin{abstract}
We study full exact functors between triangulated categories. With some hypotheses on the source category we prove that it admits an orthogonal decomposition into two pieces such that the functor restricted to one of them is zero while the restriction to the other is faithful. In particular, if the source category is either the category of perfect complexes or the bounded derived category of coherent sheaves on a noetherian scheme supported on a closed connected subscheme, then any non-trivial exact full functor is faithful as well. Finally we show that removing the noetherian hypothesis this result is not true.
\end{abstract}

\maketitle

\section{Introduction}\label{Intro}

For an exact functor $\fun{F}\colon\cat{T}_1\to\cat{T}_2$ between triangulated categories there is a list of properties that, from a purely categorical point of view, are completely unrelated or not automatically satisfied. Among them we can mention: the existence of adjoints, fullness, faithfulness and essential surjectivity. Nevertheless, as soon as $\cat{T}_i$ has a geometric nature, these properties and their relations can be studied in a more efficient and complete way.

For example, if $\cat{T}_i$ is the bounded derived category $\Db(X_i)$
of coherent sheaves on a complex smooth projective variety $X_i,$ then
any exact functor $\fun{F}\colon\Db(X_1)\to\Db(X_2)$ has always a left
and a right adjoint, by a result of Bondal and Van den Bergh \cite{BB}. This, combined with  \cite{Or1}, says that if $\fun{F}$ is fully faithful, then it is of \emph{Fourier--Mukai type}, i.e.\ there is $\ke\in\Db(X_1\times X_2)$ and an isomorphism of functors $\fun{F}\iso\FM{\ke},$ where
	$\FM{\ke}:\Db(X_1)\to\Db(X_2)$ is the exact functor defined by
	\[
	\FM{\ke}:=\rd(p_2)_*(\ke\lotimes p_1^*(-)),
	\]
and $p_i:X_1\times X_2\to X_i$ is the natural projection.

Now \cite{BO} and \cite{Br} provide a very useful criterion to establish when a Fourier--Mukai functor $\FM{\ke}\colon\Db(X_1)\to\Db(X_2)$ is fully faithful. Namely $\FM{\ke}$ is such if and only if
\[
\Hom_{\Db(X_2)}(\FM{\ke}(\ko_{x_1}),\sh[i]{\FM{\ke}(\ko_{x_2})})\iso\begin{cases}\CC & \text{if $x_1=x_2$ and $i=0$}\\0 & \text{if $x_1\neq x_2$ or $i\not\in[0,\dim X_1]$}\end{cases}
\]
for all closed points $x_1,x_2\in X_1.$

Of course, it is quite easy to construct examples of faithful functors
which are not full (e.g.\ the tensorization by a vector bundle of rank
greater than 1). On the other hand, using all the remarks above and
a collection of standard results, it is not difficult to see that a
non-trivial full exact functor $\fun{F}\colon\Db(X_1)\to\Db(X_2)$ is
also faithful. Here we give a sketch of the proof, since a more
general statement will be proved in the paper. Firstly, by the main
result of \cite{CS} (which improves \cite{Or1}), $\fun{F}$ is a
Fourier--Mukai functor. Thus, because of the above criterion and the
fact that $\fun{F}$ is full, to show that the functor is also faithful
it is enough to prove that there are no closed points $x\in X_1$ such
that $\Hom(\fun{F}(\ko_x),\fun{F}(\ko_x))=0$ or, in other words, such
that $\fun{F}(\ko_x)\iso 0.$ To see this, take the left adjoint
$\fun{G}\colon\Db(X_2)\to\Db(X_1)$ of $\fun{F}$ and consider the
composition $\fun{G}\comp\fun{F}$ which is again a Fourier--Mukai
functor, hence isomorphic to $\FM{\ke}$ for some $\ke\in\Db(X_1\times
X_1).$ Assume that there are $x_1,x_2\in X_1$ such that
$\fun{F}(\ko_{x_1})\not\iso 0$ while $\fun{F}(\ko_{x_2})\iso 0.$ By
\cite{BO} (see, in particular, Proposition 1.5 there) the Chern
character $\ch(\FM{\ke}(\ko_{x_1}))$ is not zero. On the other hand,
it is proved in \cite{Or1} that the functor $\FM{\ke}$
induces a morphism $\Phi_\ke^H\colon H^*(X_1,\QQ)\to H^*(X_1,\QQ)$
such that
\[
0\neq\ch(\FM{\ke}(\ko_{x_1}))\cdot\sqrt{\mathrm{td}(X_2)}=\Phi_\ke^H(\ch(\ko_{x_1})\cdot\sqrt{\mathrm{td}(X_1)})=\Phi_\ke^H(\ch(\ko_{x_2})\cdot\sqrt{\mathrm{td}(X_1)})=0.
\]
This contradiction proves that, if $\fun{F}$ were not faithful, then
$\fun{F}(\ko_x)\iso0$ for every closed point $x\in X.$ But this would
easily imply that $\fun{F}\iso0,$ against the assumption.

This paper is an attempt to understand to which extent the previous easy example can be pushed. In particular, we want to study when the following question may have a positive answer:
\medskip

\centerline{\sf When is a full exact functor between `geometric triangulated categories' faithful?}
\medskip

\noindent It is rather obvious that one can produce examples of full
non-trivial exact functors which are not faithful if one does not
require the source triangulated category to be
indecomposable. However, something interesting can be said even without
this hypothesis. In fact, after proving a very general statement in
Section \ref{subsec:general}, our first important (and still rather
general) result, whose proof is in Section \ref{sec:proof}, is the
following.

\begin{thm}\label{thm:gen2}
Let $\cat{T}_1$ be a triangulated category with arbitrary direct sums
that is compactly generated and let $\cat{T}_1^c$ be the subcategory of compact objects.
 Let $S\subset\cat{T}_1^c$ be a subset of compact objects and let $\cat{S}\subseteq\cat{T}_1^c$
 be the thick subcategory generated by $S.$ Let
\[
\fun{F}\colon\cat{S}\longrightarrow\cat{T}_2
\]
be a full exact functor to a triangulated category $\cat{T}_2.$ Assume
that for any object $A\in S$ the ring of endomorphisms
$\End_{\cat{T}_1}(A)$ is idempotent noetherian. Then there is an
orthogonal decomposition
\[
\cat{S}=\rort{(\ker\fun{F})}\ortdec\ker\fun{F}
\]
and $\fun{F}\rest{\rort{(\ker\fun{F})}}$ is faithful.
\end{thm}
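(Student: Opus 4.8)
The key structural input is that $\cat{S}$ is a thick subcategory of the compact objects $\cat{T}_1^c$ of a compactly generated triangulated category, and that the endomorphism rings of the generators in $S$ are idempotent noetherian. First I would set $\ker\fun{F}$ to be the full subcategory of objects $A\in\cat{S}$ with $\fun{F}(A)\iso 0$; this is automatically a thick subcategory. The heart of the argument is to produce the semiorthogonal (in fact orthogonal) decomposition $\cat{S}=\rort{(\ker\fun{F})}\ortdec\ker\fun{F}$, and this is where I expect to lean on a general statement proved in Section~\ref{subsec:general} (the one the introduction alludes to as ``a very general statement''). The natural mechanism is that fullness of $\fun{F}$ forces $\ker\fun{F}$ to be \emph{both} left and right admissible: given $A\in\cat{S}$, one wants a triangle $K\to A\to A'\to\sh{K}$ with $K\in\ker\fun{F}$ and $A'\in\rort{(\ker\fun{F})}$, and fullness is exactly what lets one lift morphisms back from $\cat{T}_2$ to split off the part of $A$ seen by $\fun{F}$. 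Idempotent-completeness of $\cat{S}$ (it is thick) is what guarantees the relevant cones and summands actually live in $\cat{S}$.

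\textbf{Reducing faithfulness to the generators.} Once the decomposition is in place, write $\cat{S}'=\rort{(\ker\fun{F})}$; by construction $\fun{F}\rest{\cat{S}'}$ kills no nonzero object, so I must upgrade ``kills nothing'' to ``faithful''. The standard reduction is: an exact functor that is full and kills no nonzero object is faithful provided one can control morphisms out of a set of generators. Concretely, suppose $f\colon X\to Y$ in $\cat{S}'$ has $\fun{F}(f)=0$; complete to a triangle $X\mor{f}Y\to Z\to\sh{X}$. Fullness of $\fun{F}$ applied to this triangle, together with $\fun{F}(f)=0$, shows that $\fun{F}(Y)\to\fun{F}(Z)$ is a split mono and $\fun{F}(Z)\iso\fun{F}(Y)\oplus\fun{F}(\sh{X})$; the splitting lifts along $\fun{F}$ (again by fullness) to morphisms in $\cat{S}'$ realizing $\fun{F}(Z)$ as coming from an actual direct sum decomposition after passing to a summand, and the ``lost'' summand lies in $\ker\fun{F}\cap\cat{S}'=0$. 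Pushing this through forces $f=0$.

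\textbf{Where the noetherian hypothesis enters.} The subtlety is that fullness gives lifts of individual morphisms but not coherent lifts of idempotents or of infinite families, so a priori the summand one splits off need not be an object of $\cat{S}$ rather than of some idempotent completion or of $\cat{T}_1$ itself. This is exactly where ``$\End_{\cat{T}_1}(A)$ idempotent'' for $A\in S$ is used: it ensures idempotents split compatibly so that the candidate summands are genuinely in $\cat{S}$, and the \emph{noetherian} condition controls the lifting of morphisms through the (possibly infinite) presentations of objects of $\cat{S}$ in terms of $S$, preventing pathologies in which a nonzero morphism is an infinite ``limit'' of morphisms each killed by $\fun{F}$. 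I would organize the proof so that all uses of these hypotheses are funneled through the general proposition of Section~\ref{subsec:general} applied to the generators, and the remaining work in Section~\ref{sec:proof} is the devissage from generators to all of $\cat{S}$ via the triangulated structure and thickness.

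\textbf{Main obstacle.} The hard part will be establishing the orthogonal (not merely semiorthogonal) decomposition and verifying that $\rort{(\ker\fun{F})}$ is closed under the operations needed (shifts, cones of the relevant maps, summands) using only fullness plus the idempotent-noetherian hypothesis on the generators; the faithfulness conclusion on $\rort{(\ker\fun{F})}$ should then follow formally from the triangle argument above, because on that piece $\fun{F}$ detects nonzero objects and is full.
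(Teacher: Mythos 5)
Your overall architecture (first decompose $\cat{S}$, then invoke ``full with trivial kernel implies faithful'') matches the paper, and your reduction of faithfulness via the triangle $X\mor{f}Y\to Z$ is essentially the paper's Lemma \ref{lem:faithful}. But the core of the proof --- actually producing the orthogonal decomposition --- is missing, and the mechanism you propose for it is wrong. You assert that ``fullness is exactly what lets one lift morphisms back from $\cat{T}_2$ to split off the part of $A$ seen by $\fun{F}$,'' i.e.\ that fullness forces $\ker\fun{F}$ to be admissible. It does not: the paper's Section \ref{sec:counter} exhibits a full, non-trivial, non-faithful exact functor on the indecomposable category $\Dp(A)$ for $A=\K[x_1,x_2,\dots]/(x_jx_i-x_i)$; there $\ker\fun{F}$ is a non-zero proper thick subcategory admitting no orthogonal complement. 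So the decomposition cannot come from fullness plus formal lifting of idempotents; the idempotent noetherian hypothesis must enter the construction of the decomposition itself, not merely as a technical check that summands land in $\cat{S}$.

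Concretely, the paper's route is: pass to the localizing subcategory $\langle S\rangle$, form the Bousfield localization adjoint $\mu$ of $\pi\colon\cat{T}_1\to\cat{T}_1/\langle\ker\fun{F}\rangle$, and observe (Proposition \ref{prop:gen1}) that fullness makes the adjunction triangle split, $A\iso\mu\comp\pi(A)\oplus\sh[-1]{N_A}$. The entire difficulty is then to show $\mu\comp\pi(A)$ is \emph{compact} for $A\in S$ indecomposable. This is done by an iteration: a non-zero map into $\sh[-1]{N_A}$ from a compact object is bootstrapped into a sequence of endomorphisms $a_i=l_A\comp\phi_i$ of $A$ satisfying $a_ja_i=a_i$ for $i<j$, and the hypothesis that $\End_{\cat{T}_1}(A)$ is idempotent noetherian is precisely the chain condition on such sequences forcing some $a_N$ to be idempotent, hence the identity, hence $N_A=0$ up to the degenerate case $\pi(A)=0$. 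Your description of the hypothesis's role (``idempotents split compatibly,'' ``controls the lifting of morphisms through infinite presentations'') does not correspond to any step that could be made precise, and you also appear to misread the hypothesis as saying something about idempotent elements of $\End(A)$ rather than as an ascending chain condition. Without the localization-plus-iteration argument (or a genuine substitute for it), the proposal has no proof of the decomposition, which is the theorem's actual content.
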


See Definition \ref{def:idempnoeth} for the notion of idempotent
noetherian ring. As it will turn out, the ring of endomorphisms of an
object in the bounded derived category of coherent sheaves on a
noetherian scheme has this property (see Proposition
\ref{prop:noeth}).

Notice that if in Theorem \ref{thm:gen2} we assume $\cat{S}$ to be
indecomposable and $\fun{F}$ to be non-trivial, then we can conclude
that $\fun{F}$ is actually faithful. So in the geometric case we
consider a noetherian scheme $X$ containing a closed connected
subscheme $Z$ and we assume that $\cat{S}$ is either the bounded
derived category $\Db_Z(X)$ of coherent sheaves on $X$ supported on
$Z$ or the subcategory $\Dp_Z(X)\subseteq\Db_Z(X)$ consisting of
perfect complexes. Recall that a complex in $\Db_Z(X)$ is
\emph{perfect} if it is locally quasi-isomorphic to a complex of
locally free sheaves of finite type on $X.$ Due to Corollary
\ref{cor:orth}, these categories are indecomposable, and we get the
following result which we prove in Section \ref{subsec:geometry}.

\begin{thm}\label{thm:main}
Let $X$ be a noetherian scheme containing a closed subscheme $Z$ and
let $\cat{S}$ be either $\Dp_Z(X)$ or $\Db_Z(X).$ Let $\cat{T}$ be a
triangulated category and let
\[
\fun{F}\colon\cat{S}\longrightarrow\cat{T}
\]
be a full exact functor which is not isomorphic to the zero functor. If $Z$ is connected, then $\fun{F}$ is also faithful.	
\end{thm}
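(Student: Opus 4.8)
The strategy is to reduce Theorem \ref{thm:main} to Theorem \ref{thm:gen2}, so the plan splits into three tasks: (i) exhibit $\cat{S}$ (either $\Dp_Z(X)$ or $\Db_Z(X)$) as a thick subcategory of the compact objects of a compactly generated triangulated category, generated by a set $S$ of compact objects; (ii) verify the idempotent-noetherian hypothesis on $\End_{\cat{T}_1}(A)$ for $A\in S$; (iii) use connectedness of $Z$ to show $\cat{S}$ is indecomposable, so that the orthogonal decomposition $\cat{S}=\rort{(\ker\fun{F})}\ortdec\ker\fun{F}$ forces one summand to vanish, and since $\fun{F}\not\iso 0$ it must be $\ker\fun{F}$ that is zero, whence $\fun{F}=\fun{F}\rest{\rort{(\ker\fun{F})}}$ is faithful.

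For (i), the natural ambient category $\cat{T}_1$ is $\D_{\Qcoh,Z}(X)$, the (unbounded) derived category of complexes of $\so_X$-modules with quasi-coherent cohomology supported on $Z$; this has arbitrary direct sums and is compactly generated, with compact objects the perfect complexes supported on $Z$ — so $\cat{T}_1^c=\Dp_Z(X)$. When $\cat{S}=\Dp_Z(X)$ we take $S$ to be a (small) set of compact generators. When $\cat{S}=\Db_Z(X)$ one instead observes that $\Db_Z(X)$ is the thick subcategory of $\Dp_Z(X)$... — wait, that is false in general; rather, one passes to a suitable subscheme-neighborhood or uses that locally $\so_{Z'}$-type objects compactly generate, and more to the point invokes that $\Db_Z(X)$ is generated as a thick subcategory by the structure sheaves $\so_{Z_i}$ of the closed subschemes, which are compact in an appropriate compactly generated category (or one works with $\D^b_{\Qcoh,Z}(X)$ and its compact objects). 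The key point to get right here is exhibiting $S$ so that $\cat{S}$ is the thick subcategory it generates \emph{inside} $\cat{T}_1^c$; I expect this bookkeeping — especially the $\Db_Z(X)$ case, and checking that the relevant generating objects really are compact in the ambient category — to be the main technical obstacle, and it is presumably handled in Section \ref{subsec:geometry} via standard results on compact generation of derived categories of quasi-coherent sheaves (Neeman, Bondal–Van den Bergh).

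For (ii), the point is Proposition \ref{prop:noeth}: the ring $\End_{\cat{T}_1}(A)=\Hom_{\Db(X)}(A,A)$ for $A$ a bounded complex of coherent sheaves on a noetherian scheme is idempotent noetherian in the sense of Definition \ref{def:idempnoeth}; this is cited as already proved, so here one only needs to check the generators $A\in S$ are bounded complexes of coherent sheaves — immediate since $S\subset\Dp_Z(X)\subset\Db_Z(X)\subset\Db(X)$. For (iii), indecomposability of $\cat{S}$ when $Z$ is connected is Corollary \ref{cor:orth}, which I would invoke directly; its proof presumably runs as follows: an orthogonal decomposition of $\cat{S}$ would yield a decomposition of the unit $\so_Z$-type generator, hence a nontrivial idempotent in a suitable ring of global functions, hence a disconnection of $Z$, contradiction. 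With (i)–(iii) in hand, Theorem \ref{thm:gen2} applies and gives $\cat{S}=\rort{(\ker\fun{F})}\ortdec\ker\fun{F}$ with $\fun{F}\rest{\rort{(\ker\fun{F})}}$ faithful; indecomposability forces $\ker\fun{F}=0$ (it cannot be all of $\cat{S}$ since $\fun{F}\not\iso0$), so $\rort{(\ker\fun{F})}=\cat{S}$ and $\fun{F}$ is faithful, completing the proof.
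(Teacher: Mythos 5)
Your overall architecture is the paper's: reduce to Theorem \ref{thm:gen2}, verify the idempotent-noetherian hypothesis via Proposition \ref{prop:noeth}, and use Corollary \ref{cor:orth} (indecomposability of $\cat{S}$ for $Z$ connected) together with $\fun{F}\not\iso 0$ to kill $\ker\fun{F}$ in the orthogonal decomposition. Steps (ii) and (iii) and the final deduction are fine, and your treatment of the $\Dp_Z(X)$ case in step (i) (ambient category $\D_Z(X)$, compactly generated with $\D_Z(X)^c=\Dp_Z(X)$, which is Proposition \ref{prop:catcomp}) is correct.

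The genuine gap is the $\Db_Z(X)$ case of step (i), which you flag but do not resolve. None of your proposed fixes works as stated: the structure sheaves $\so_{Z_i}$ are not compact in $\D_Z(X)$ unless $X$ is regular (compactness there means perfection), $\Db_Z(X)$ is not a thick subcategory of $\D_Z(X)^c=\Dp_Z(X)$ (the inclusion goes the other way), and $\D^b(\Qcoh_Z(X))$ does not have arbitrary direct sums, so its ``compact objects'' are not the right notion. The paper's resolution is Keller's theorem (part b) of \cite[Thm.\ 3.8]{K}, as in Remark \ref{rmk:keller}): $\Db_Z(X)$ is algebraic and idempotent complete, hence equivalent to the category of compact objects in the derived category of a dg-category, which is compactly generated with arbitrary direct sums; one then applies Theorem \ref{thm:gen2} to that ambient category. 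This is packaged as Corollary \ref{cor:semigen} (for thick subcategories of $\D^b(\cat{A})$ with noetherian cohomology, here $\cat{A}=\Qcoh_Z(X)$ via Remark \ref{rmk:qcoh}), with the idempotent-noetherian hypothesis supplied by Proposition \ref{prop:idnoeth}. Note that the ambient compactly generated category produced this way is abstract, so the hypothesis of Theorem \ref{thm:gen2} must be checked for endomorphism rings computed in $\Db_Z(X)$ itself, which is exactly what Proposition \ref{prop:idnoeth} does; this is the missing ingredient in your plan.
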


In Section \ref{sec:counter} we show that if we do not assume $X$ to
be noetherian, then the above result does not necessarily hold
true. Indeed, we give an example of a non-noetherian (affine) scheme
$X$ over a field $\K$ such that $\Dp(X)$ is indecomposable and of a
full non-trivial exact functor $\fun{F}\colon\Dp(X)\to\D(\K)$ to the
(unbounded) derived category of $\K$-vector spaces which is not
faithful.

\section{A general result}\label{subsec:general}

If $\fun{F}\colon\cat{A}\to\cat{B}$ is an
additive functor between additive categories, we will denote by
$\ker\fun{F}$ the (strictly) full subcategory of $\cat{A}$ having as
objects the $A$ such that $\fun{F}(A)\iso0,$ and by $\im\fun{F}$ the
(strictly) full subcategory of $\cat{B}$ having as objects the $B$
such that $B\iso\fun{F}(A)$ for some $A\in\cat{A}.$ Notice that
$\ker\fun{F}$ is a (thick) triangulated subcategory of $\cat{A}$ if
$\cat{A}$ and $\cat{B}$ are triangulated and $\fun{F}$ is exact.

For the convenience of the reader we recall the proof of the following lemma which is known to experts and, for example, is contained
in the proof of \cite[Thm.\ 3.9]{Or}.

\begin{lem}\label{lem:faithful}
Let $\cat{T}_1$ and $\cat{T}_2$ be triangulated categories and let $\fun{F}:\cat{T}_1\to\cat{T}_2$ be a full exact functor such that $\ker\fun{F}\iso 0.$ Then $\fun{F}$ is faithful.
\end{lem}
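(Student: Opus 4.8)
The plan is to show that any morphism $f\colon A\to B$ in $\cat{T}_1$ with $\fun{F}(f)=0$ must itself be zero, by producing a ``factorization obstruction'' inside a triangle and exploiting fullness of $\fun{F}$ to lift a splitting back to $\cat{T}_1$. First I would complete $f$ to a distinguished triangle $A\mor{f}B\mor{g}C\to A[1]$ in $\cat{T}_1$. Applying the exact functor $\fun{F}$ yields a distinguished triangle $\fun{F}(A)\mor{0}\fun{F}(B)\mor{\fun{F}(g)}\fun{F}(C)\to\fun{F}(A)[1]$ in $\cat{T}_2$; since the first map is zero, this triangle splits, so $\fun{F}(g)$ admits a retraction, i.e.\ there is $s\colon\fun{F}(C)\to\fun{F}(B)$ in $\cat{T}_2$ with $\fun{F}(g)\comp s=\id_{\fun{F}(C)}$.

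Next I would use fullness: since $\fun{F}$ is full, $s=\fun{F}(t)$ for some morphism $t\colon C\to B$ in $\cat{T}_1$. Then $\fun{F}(g\comp t-\id_C)=\fun{F}(g)\comp\fun{F}(t)-\id_{\fun{F}(C)}=0$, so the object $C$ carries an endomorphism $e:=\id_C-g\comp t$ with $\fun{F}(e)=0$. The key observation is that $e$ is, up to the action just described, close to idempotent modulo the radical: one checks that $e^2 = e - (\text{terms involving } g\comp t \text{ that are killed})$; more precisely, I want to argue that some object built out of $C$ and $e$ lies in $\ker\fun{F}$, hence is zero, which forces structural constraints on $C$. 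The cleanest route: the map $g$ factors as $C\xrightarrow{\,\text{something}\,}(\text{an object of }\ker\fun{F})\to\cdots$ — but since $\ker\fun{F}\iso 0$, any object of $\ker\fun{F}$ is a zero object, so any morphism factoring through it is zero.

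Concretely, I would complete $e=\id_C-g\comp t$ (or rather the relevant auxiliary map) to a triangle and identify a cone lying in $\ker\fun{F}$: because $\fun{F}(e)=0$, the morphism $e$ fits in a triangle whose third term $D$ satisfies $\fun{F}(D)\iso\fun{F}(C)\oplus\fun{F}(C)[1]$ or similar, but the real point is that $\id_C$ factors through $g\comp t$ modulo a morphism that $\fun{F}$ sends to an isomorphism onto a summand; tracking this shows $g$ itself is a split epimorphism after all, so its cone $A[1]$ — i.e.\ the cone of $B\mor{g}C$ shifted — is a retract of $B$, and the connecting map $C\to A[1]$ is zero, whence $g$ is a split epi and $f$ is a split mono composed into it trivially. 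Then $f=0$ follows: in the split triangle $A\mor{f}B\to C\to A[1]$ with $g$ split epi, $B\iso A\oplus C$ and $f$ is the inclusion of $A$; but applying $\fun{F}$ gives $\fun{F}(f)$ an inclusion of a summand, and $\fun{F}(f)=0$ forces $\fun{F}(A)\iso 0$, i.e.\ $A\in\ker\fun{F}$, hence $A\iso 0$ and $f=0$.

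The main obstacle I anticipate is organizing the splitting argument so that one genuinely extracts an object of $\ker\fun{F}$ (which by hypothesis is zero) rather than merely a weak/idempotent-type statement: the subtlety is that fullness only lets us lift morphisms, not commuting squares or idempotents, so I must be careful that the lift $t$ of the retraction $s$ interacts correctly with $g$ — it need not satisfy $g\comp t=\id_C$ on the nose, only after applying $\fun{F}$. The resolution is exactly the observation that $\id_C-g\comp t\in\Hom(C,C)$ is killed by $\fun{F}$, so completing it to a triangle $C\to C\to E\to C[1]$ gives $\fun{F}(E)\iso 0$, hence $E\iso 0$, hence $\id_C-g\comp t$ is an isomorphism — but being killed by $\fun{F}$ and an isomorphism forces $\fun{F}(C)\iso 0$, so $C\in\ker\fun{F}\iso 0$, so $C\iso 0$, and therefore in the original triangle $A\mor{f}B\to 0\to A[1]$ the map $f$ is an isomorphism; one last application of $\fun{F}(f)=0$ gives $\fun{F}(A)\iso\fun{F}(B)\iso 0$, so $A\iso 0$ and $f=0$. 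Thus $\fun{F}$ is faithful. $\Box$
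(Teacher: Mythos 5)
Your overall strategy --- complete $f$ to a triangle, use $\fun{F}(f)=0$ to split the image triangle, lift the splitting datum by fullness, and detect the defect of the lift via an object of $\ker\fun{F}$ --- is the right one, but the execution goes wrong at the splitting step and the error propagates. In the triangle $\fun{F}(A)\mor{0}\fun{F}(B)\mor{\fun{F}(g)}\fun{F}(C)\to\fun{F}(A)[1]$, the vanishing of the \emph{first} map makes $\fun{F}(g)$ a split \emph{mono}morphism: $\fun{F}(C)\iso\fun{F}(B)\oplus\fun{F}(A)[1]$ and there is $r$ with $r\comp\fun{F}(g)=\id_{\fun{F}(B)}$. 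There is in general no $s$ with $\fun{F}(g)\comp s=\id_{\fun{F}(C)}$ --- that would force $\fun{F}(A)[1]\iso0$, which is essentially what you are trying to prove, not something you may assume. Consequently the endomorphism $e=\id_C-g\comp t$ you put on $C$ is not killed by $\fun{F}$: with $t$ lifting $r$, the map $\fun{F}(g\comp t)$ is the idempotent projecting $\fun{F}(C)$ onto the summand $\fun{F}(B)$, so $\fun{F}(e)$ is the complementary projection onto $\fun{F}(A)[1]$.

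Two later steps would fail even if one granted $\fun{F}(e)=0$. First, if $\fun{F}(e)=0$ then the cone $E$ of $e$ satisfies $\fun{F}(E)\iso\fun{F}(C)\oplus\fun{F}(C)[1]$, not $\fun{F}(E)\iso0$; it is morphisms sent to \emph{isomorphisms}, not to zero, whose cones land in $\ker\fun{F}.$ Second, the conclusion $C\iso0$ cannot be right: taking $f=0$ between two nonzero objects gives $C\iso B\oplus\sh{A}\niso0$, and ``$f$ is an isomorphism'' is the opposite of the desired ``$f=0$.'' The repair is to place the endomorphism on $B$ rather than on $C$: lift the retraction $r$ by fullness to $t\colon C\to B$ and set $h:=t\comp g\colon B\to B.$ Then $\fun{F}(h)=r\comp\fun{F}(g)=\id_{\fun{F}(B)},$ so $\cone{h}\in\ker\fun{F},$ hence $\cone{h}\iso0$ and $h$ is an isomorphism; thus $g$ admits a left inverse, and since $g\comp f=0$ this forces $f=0.$ This is exactly the paper's argument.
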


\begin{proof}
Assume that there are $A,B\in\cat{T}_1$ and $f:A\to B$ a morphism such that $\fun{F}(f)=0.$ Complete the morphism to a distinguished triangle
\[
\xymatrix{
A\ar[r]^-{f}& B\ar[r]^-{g}&\cone{f}
}
\]
so that, applying the functor $\fun{F},$ we get the distinguished triangle
\[
\xymatrix{
\fun{F}(A)\ar[rr]^-{\fun{F}(f)=0}&&\fun{F}(B)\ar[rr]^-{\fun{F}(g)}&&\fun{F}(\cone{f}).
}
\]
Then $\id:\fun{F}(B)\to\fun{F}(B)$ factors through $\fun{F}(g)\colon\fun{F}(B)\to\fun{F}(\cone{f}).$

As $\fun{F}$ is full, there exists a morphism $h\colon B\to B$ factoring through $g$ and such that $\fun{F}(h)=\id.$ Then $\fun{F}(\cone{h})\iso\cone{\fun{F}(h)}\iso 0.$ Since $\ker\fun{F}\iso 0,$ we get $\cone{h}\iso 0$ and $h$ is an isomorphism. This implies that $g$ is a (split) monomorphism. In particular $f=0,$ and so $\fun{F}$ is faithful.
\end{proof}

\begin{definition}
An {\em orthogonal decomposition} $\cat{T}=\cat{T}_1\ortdec\cat{T}_2$
of a triangulated category $\cat{T}$ is given by two full triangulated
subcategories $\cat{T}_1$ and $\cat{T}_2$ satisfying the following
conditions:
\begin{itemize}
\item[(1)] $\cat{T}_1$ and $\cat{T}_2$ are completely orthogonal, meaning that
$\Hom(A_1,A_2)=\Hom(A_2,A_1)=0$ for every objects $A_i$ of
$\cat{T}_i;$
\item[(2)] For every object $A$ of $\cat{T}$ there exist objects $A_i$ of
$\cat{T}_i$ such that $A\iso A_1\oplus A_2.$
\end{itemize}
A triangulated category is {\em indecomposable} if it admits only
trivial orthogonal decompositions.
\end{definition}

We begin with the following general result.

\begin{prop}\label{prop:gen1}
Let $\cat{T}_1$ and $\cat{T}_2$ be triangulated categories and let $\fun{F}:\cat{T}_1\to\cat{T}_2$ be a full exact functor. Assume moreover that the projection functor $\pi\colon\cat{T}_1\to\cat{T}_1/\ker\fun{F}$ has an adjoint $\mu\colon\cat{T}_1/\ker\fun{F}\to\cat{T}_1.$ Then the category $\cat{T}_1$ has an orthogonal decomposition of the form
\[
\cat{T}_1=\im\mu\ortdec\ker\fun{F}
\]
and $\fun{F}\rest{\im\mu}$ is faithful. In particular, if $\cat{T}_1$
is indecomposable and $\fun{F}$ is not isomorphic to $0,$ then $\fun{F}$
is faithful.
\end{prop}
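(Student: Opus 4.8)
The plan is to produce the orthogonal decomposition by using the adjunction between $\pi$ and $\mu$ to split every object of $\cat{T}_1$ into a piece lying in $\im\mu$ and a piece lying in $\ker\fun{F}$, and then to deduce faithfulness of $\fun{F}\rest{\im\mu}$ from Lemma \ref{lem:faithful}. I will not specify yet whether $\mu$ is meant to be a left or a right adjoint; the argument is symmetric, so suppose $\mu$ is right adjoint to $\pi$ (the left-adjoint case is dual). The first step is to record the basic consequences of the adjunction: since $\pi$ is a Verdier quotient functor it is essentially surjective, so the counit $\epsilon\colon\pi\mu\to\id_{\cat{T}_1/\ker\fun{F}}$ is an isomorphism (equivalently $\mu$ is fully faithful), and therefore $\im\mu$ is a full triangulated subcategory of $\cat{T}_1$ equivalent to $\cat{T}_1/\ker\fun{F}$. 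For each object $A$ of $\cat{T}_1$ the unit $\eta_A\colon A\to\mu\pi(A)$ fits into a distinguished triangle
\[
K_A\mor{}A\mor{\eta_A}\mu\pi(A)\mor{}\sh{K_A},
\]
and applying $\pi$ and using that $\pi\eta_A$ is a split mono (one of the triangle identities, since $\epsilon\pi(A)$ is iso) shows $\pi(K_A)\iso0$, i.e.\ $K_A\in\ker\fun{F}$.

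The second step is to check the orthogonality and splitting conditions. Orthogonality in one direction is immediate from the adjunction: for $N\in\ker\fun{F}$ and $\mu(C)\in\im\mu$ we have $\Hom(\mu(C),N)\iso\Hom(C,\pi(N))=\Hom(C,0)=0$ (using that $\mu$ is right adjoint to $\pi$ and $\pi$ kills $\ker\fun{F}$). For the other direction, $\Hom(N,\mu(C))\iso\Hom(\pi(N),C)=0$ as well if $\mu$ is instead a left adjoint; when $\mu$ is right adjoint one gets the vanishing of $\Hom(N,\mu(C))$ from the triangle above — apply $\Hom(N,-)$ to $K_A\to A\to\mu\pi(A)$ with $A=N$ and use $\Hom(N,K_N)\ni$ a section splitting $K_N\to N$ — so in fact $N$ is a direct summand of $K_N$, hence $K_N\iso N$ and the triangle degenerates to $A\iso\mu\pi(A)\oplus N$ only after one checks the connecting map vanishes. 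Let me instead organise this more cleanly: from $\pi(K_A)\iso0$ and complete orthogonality (once both Hom-vanishings are in hand) the triangle $K_A\to A\to\mu\pi(A)\to\sh{K_A}$ has connecting morphism in $\Hom(\mu\pi(A),\sh{K_A})=0$, so it splits, giving $A\iso\mu\pi(A)\oplus K_A$ with $\mu\pi(A)\in\im\mu$ and $K_A\in\ker\fun{F}$. This verifies condition (2) of the definition of orthogonal decomposition, and condition (1) is the two Hom-vanishings; hence $\cat{T}_1=\im\mu\ortdec\ker\fun{F}$.

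The third step handles faithfulness. The restriction $\fun{F}\rest{\im\mu}$ is full because $\fun{F}$ is full and $\im\mu$ is a full subcategory. Its kernel is $\ker\fun{F}\cap\im\mu$, which is zero: an object of the intersection is simultaneously killed by $\pi$ (being in $\ker\fun{F}$) and isomorphic to $\mu\pi$ of itself, hence isomorphic to $\mu(0)\iso0$. Therefore $\fun{F}\rest{\im\mu}$ is a full exact functor with trivial kernel, so by Lemma \ref{lem:faithful} it is faithful. Finally, if $\cat{T}_1$ is indecomposable then one of the two summands is zero; if $\ker\fun{F}\iso0$ then $\fun{F}=\fun{F}\rest{\im\mu}$ is faithful by the above, while if $\im\mu\iso0$ then $\cat{T}_1=\ker\fun{F}$, forcing $\fun{F}\iso0$, contrary to hypothesis. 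The main obstacle I anticipate is the bookkeeping around which adjoint $\mu$ is and getting \emph{both} directions of complete orthogonality: one direction is a formal adjunction computation, but the other requires combining the adjunction with the triangle coming from the unit (or counit) and the triangle identities, and one must be careful that the Verdier quotient $\pi$ being essentially surjective is what forces the relevant unit/counit to be an isomorphism on the nose.
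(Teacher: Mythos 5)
Your overall architecture (unit triangle, splitting, then Lemma \ref{lem:faithful} applied on the summand $\im\mu$) is the right one, but the central step --- complete orthogonality of $\im\mu$ and $\ker\fun{F}$ --- is not actually established, and the way you set it up cannot work as written. With $\mu$ a \emph{right} adjoint of $\pi$, the adjunction isomorphism reads $\Hom(X,\mu(C))\iso\Hom(\pi(X),C)$; it computes $\Hom(N,\mu(C))$, not $\Hom(\mu(C),N)$. So the direction you call ``immediate'' (your $\Hom(\mu(C),N)\iso\Hom(C,\pi(N))$) uses the adjunction the wrong way round --- that identity would hold if $\mu$ were a left adjoint --- while the direction you then labour over, $\Hom(N,\mu(C))=0$, is in fact the trivial one. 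The genuinely hard vanishing, $\Hom(\im\mu,\ker\fun{F})=0$, is never proved: your ``cleaner organisation'' derives the splitting of the triangle $K_A\to A\to\mu\comp\pi(A)$ from complete orthogonality ``once both Hom-vanishings are in hand'', but one of those two vanishings is exactly what is missing, so the argument is circular.

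The missing ingredient is the fullness of $\fun{F}$, which your decomposition step never invokes --- and it cannot be dispensed with: for an arbitrary exact functor, an adjoint $\mu$ to $\pi$ only yields a Bousfield (semiorthogonal) decomposition, with $\Hom(\ker\fun{F},\im\mu)=0$ but $\Hom(\im\mu,\ker\fun{F})$ typically nonzero, and the unit triangle need not split. The paper uses fullness as follows: $\fun{F}$ factors as $\fun{F}'\comp\pi$ with $\fun{F}'$ fully faithful (by Lemma \ref{lem:faithful}), whence $\Hom(B,\mu\comp\pi(A))\iso\Hom(\fun{F}(B),\fun{F}(A))$; fullness of $\fun{F}$ then makes $\Hom(B,A)\to\Hom(B,\mu\comp\pi(A))$ surjective for every $B$, and taking $B=\mu\comp\pi(A)$ forces $n_A=0$, giving the splitting $A\iso\mu\comp\pi(A)\oplus\sh[-1]{N_A}$ with no orthogonality needed. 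The vanishing of $\Hom(\mu\comp\pi(A),B)$ for $B\in\ker\fun{F}$ is a separate argument: apply $\pi$ to the cone of such a morphism $f$ and use the already established fact that every unit map is a split epimorphism to conclude $f=0$. A minor further point: the counit of $(\pi,\mu)$ is an isomorphism because $\pi$ is a localization (see Lemma 9.1.7 in \cite{N2}), not merely because it is essentially surjective.
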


\begin{proof}
Passing, if necessary, to the opposed functor of $\fun{F}$ (defined as $\fun{F},$ but between the opposed categories), we can assume that $\mu$ is a right adjoint of $\pi.$

Now, given $A\in\cat{T}_1$ and using the adjunction between $\mu$ and $\pi,$ we get the distinguished triangle
\begin{equation*}\label{eqn:1}
\xymatrix{
A\ar[r]^-{m_A}& \mu\comp\pi(A)\ar[r]^-{n_A}& N_A.
}
\end{equation*}
The functor $\fun{F}$ induces in a natural way a functor $\fun{F}'\colon\cat{T}_1/\ker\fun{F}\to\cat{T}_2$ which is fully faithful due to Lemma \ref{lem:faithful}. Hence, for all $A,B\in\cat{T}_1,$
\[
\Hom(B,\mu\comp\pi(A))\iso\Hom(\pi(B),\pi(A))\iso\Hom(\fun{F}'\comp\pi(B),\fun{F}'\comp\pi(A))=\Hom(\fun{F}(B),\fun{F}(A)).
\]

As $\fun{F}=\fun{F}'\comp\pi$ is full, this implies that the morphism
\[
\Hom(B,A)\longrightarrow\Hom(B,\mu\comp\pi(A))
\]
given by the composition with $m_A$ is surjective for all $A,B\in\cat{T}_1.$ In particular, the map
\[
\varphi_{A,B}\colon\Hom(B,\mu\comp\pi(A))\longrightarrow\Hom(B,N_A),
\]
obtained composing with $n_A,$ is zero. Taking $B=\mu\comp\pi(A)$ in the above argument, we get $\varphi_{A,B}(\id)=n_A=0.$ This means that, for any $A\in\cat{T}_1,$ there is a decomposition
\[
A\iso\mu\comp\pi(A)\oplus\sh[-1]{N_A}.
\]

By \cite[Lemma 9.1.7]{N2} the functor $\mu$ as adjoint to a projection functor is fully faithful, i.e. $\pi\circ\mu\cong \id.$
Therefore, the functor $\pi$ induces an equivalence between $\im\mu$ and  the quotient $\cat{T}_1/\ker\fun{F}.$
Since $\fun{F}'$ is faithful, the functor $\fun{F}\rest{\im\mu}$ is faithful too.

Moreover, since $\mu$ is fully faithful the map
$\pi(m_A)$ is an isomorphism.
This implies that $\pi(N_A)\iso\cone{\pi(m_A)}\iso 0.$
In order to get the orthogonal decomposition, it remains to show that $\ker\fun{F}$ and $\im\mu=\im(\mu\comp\pi)$ are orthogonal. By adjunction, it is obvious that $\Hom(A,B)=0$ if $A\in\ker\fun{F}$ and $B\in\im\mu.$ For the other direction, assume that there is a morphism $f\colon\mu\comp\pi(A)\to B,$ for some $A\in\cat{T}_1$ and $B\in\ker\fun{F}.$ Consider the distinguished triangle
\[
\xymatrix{
\mu\comp\pi(A)\ar[r]^-{f}& B\ar[r]&\cone{f}
}
\]
and apply the functor $\pi$ getting
\[
\xymatrix{
\pi\comp\mu\comp\pi(A)\ar[r]^-{\pi(f)}& \pi(B)\ar[r]&\pi(\cone{f}).
}
\]
Thus $\pi(\sh[-1]{\cone{f}})\iso\pi\comp\mu\comp\pi(A)\iso\pi(A)$ and
$\mu\comp\pi(\sh[-1]{\cone{f}})\iso\mu\comp\pi(A).$ Moreover the map
$\sh[-1]{\cone{f}}\to\mu\comp\pi(A)$ can be identified with the canonical map $\sh[-1]{\cone{f}}\to\mu\comp\pi(\sh[-1]{\cone{f}}).$

As $\cone{f}\in\cat{T}_1,$ the calculations above yield that the map
$\sh[-1]{\cone{f}}\to\mu\comp\pi(\sh[-1]{\cone{f}})\iso\mu\comp\pi(A)$
is an epimorphism, and so $f=0.$ This is what we need to prove.
%Since $\fun{F}=\fun{F}'\comp\pi$ and $\fun{F}'$ is faithful, to show that $\fun{F}\rest{\im\mu}$ is faithful it is enough to prove that
%$\pi\rest{\im\mu}$ is faithful. But, due to the above calculation, for any $A\in\cat{T}_1,$
%$\mu\comp\pi(m_A)\colon\mu\comp\pi(A)\to\mu\comp\pi\comp\mu\comp\pi(A)$ is an isomorphism and
%so $\mu\comp\pi\rest{\im\mu}\iso\id.$ In particular, $\mu\comp\pi\rest{\im\mu}$ is faithful, whence the same is true for $\pi\rest{\im\mu}.$
\end{proof}

\begin{remark}\label{rmk:sat}
It is well-known that every exact functor from $\cat{T}_1$ has a right (respectively left) adjoint if $\cat{T}_1$ is right (respectively left) saturated (see \cite{BB}).
\end{remark}

\begin{remark}\label{rmk:pseudoadj}
Assume that $\cat{T}_1$ and $\cat{T}_2$ are triangulated categories and let $\fun{F}:\cat{T}_1\to\cat{T}_2$ be a full exact functor admitting a pseudo-adjoint $\fun{G}\colon\cat{T}_2\to\tilde{\cat{T}}_1$ such that $\im(\fun{G}\comp\fun{F})\subseteq\cat{T}_1.$ Then $\pi$ has an adjoint which is simply $\fun{G}\comp\fun{F}'$ (where $\fun{F}'\colon\cat{T}_1/\ker\fun{F}\to\cat{T}_2$ is defined in the above proof). Hence Proposition \ref{prop:gen1} applies.

With a  \emph{left} (respectively \emph{right}) {\em pseudo-adjoint} of a functor
$\fun{F}\colon\cat{C}\to\cat{C}'$ we mean a functor
$\fun{G}\colon\cat{C}'\to\tilde{\cat{C}},$ where $\tilde{\cat{C}}$ is
some category containing $\cat{C}$ as a full subcategory, together
with a natural isomorphism
$\Hom_{\cat{C}'}(A',\fun{F}(A))\iso\Hom_{\tilde{\cat{C}}}(\fun{G}(A'),A)$
(respectively
$\Hom_{\cat{C}'}(\fun{F}(A),A')\iso\Hom_{\tilde{\cat{C}}}(A,\fun{G}(A'))$)
for all objects $A$ of $\cat{C}$ and $A'$ of $\cat{C}'.$
\end{remark}

\section{The categorical case}\label{sec:proof}

In this section we prove Theorem \ref{thm:gen2} and show how to apply it to subcategories of  noetherian objects.
For this purpose we introduce the notion of an idempotent noetherian ring.

\subsection{General setting}\label{subsec:categprelim}

We will be interested in the following special class of rings
appearing naturally in geometric situations.

\begin{definition}\label{def:idempnoeth}
A ring $R$ is \emph{(right) idempotent noetherian} if for every sequence
$\{a_i\}_{i\in\NN}$ of elements in $R$ satisfying
\begin{equation}\label{eqn:strangeseq}
a_ja_i=a_i\qquad\text{for all $i<j$}
\end{equation}
there exists a positive integer $n$ such that $a_iR=a_nR$ for all $i\geq n.$
\end{definition}

Analogously, one can define left idempotent noetherian rings. As this notion will not be needed in the rest of the paper, right idempotent noetherian rings will simply be called idempotent noetherian.

\begin{remark}\label{rmk:idem}
If $\{a_i\}_{i\in\NN}$ is a sequence in a ring $R$ satisfying
\eqref{eqn:strangeseq} and such that $a_iR=a_nR$ for $i\geq n,$ then
$a_i$ is idempotent for $i>n.$ Indeed, there exists $r\in R$ such that
$a_i=a_{i-1}r,$ hence
\[a_ia_i=a_ia_{i-1}r=a_{i-1}r=a_i.\]
\end{remark}

We begin with the following easy result.

\begin{lem}\label{lem:useful}
If $\cat{A}$ is an additive category and $A\in\cat{A}$ is such that $\End_{\cat{A}}(A)$ is idempotent noetherian, then $A$ is isomorphic to a finite direct sum of indecomposable objects.
\end{lem}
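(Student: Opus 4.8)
The plan is to argue that if $A$ were not a finite direct sum of indecomposables, one could build a sequence $\{a_i\}$ in $R := \End_{\cat A}(A)$ satisfying the condition \eqref{eqn:strangeseq} for which the ideals $a_iR$ do not stabilize, contradicting the idempotent noetherian hypothesis. The key observation is that decomposing $A$ corresponds to choosing idempotents in $R$, and that a properly descending chain of direct summands gives rise to such a non-stabilizing sequence. First I would observe that a decomposition $A \cong A' \oplus A''$ with both summands nonzero produces a nonzero idempotent $e \in R$ (projection onto $A'$) which is neither $0$ nor $1$; conversely, since $\cat A$ is additive and hence has split idempotents only after passing to the idempotent completion, one should be slightly careful. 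Actually, in a general additive category idempotents need not split, so I would instead work directly with direct sum decompositions of $A$ and the corresponding \emph{orthogonal} systems of idempotents in $R$, which always exist: if $A \cong \bigoplus_{k=1}^m B_k$ then the projections $e_k$ onto the summands are orthogonal idempotents summing to $1$.

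The main step is the following. Suppose $A$ admits no finite decomposition into indecomposables. Then I would construct recursively a sequence of decompositions $A \cong C_i \oplus D_i$ (nonzero summands) with $C_{i+1}$ a proper direct summand of $C_i$: start with any nontrivial decomposition $A \cong C_1 \oplus D_1$ where $C_1$ is not a finite sum of indecomposables (such a choice exists, since otherwise $A$ itself would be, splitting off indecomposable pieces and using that one side must fail to terminate); then split $C_1 \cong C_2 \oplus D_2'$ nontrivially with $C_2$ again not a finite sum of indecomposables, and iterate. Setting $a_i$ to be the idempotent of $R$ given by the composite $A \epi C_i \mono A$ (projection followed by inclusion relative to the decomposition $A \cong C_i \oplus (D_i' \oplus D_{i-1}' \oplus \dots \oplus D_1)$), I get idempotents with $a_j a_i = a_i = a_i a_j$ for $i < j$, because the summand $C_j$ is contained in $C_i$. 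In particular \eqref{eqn:strangeseq} holds, so by hypothesis $a_i R = a_n R$ for all $i \geq n$; but $a_i R \cong \Hom_{\cat A}(A, C_i)$ compatibly with the inclusions $C_{i+1} \mono C_i$, and I would derive that $a_{n+1} R = a_n R$ forces the inclusion $C_{n+1} \mono C_n$ to be split epi, hence an isomorphism, contradicting that $D_{n+1}'$ is nonzero.

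The hard part, and the step deserving the most care, is verifying that the ideal equality $a_{n+1}R = a_nR$ really does collapse the direct-sum decomposition $C_n \cong C_{n+1} \oplus D_{n+1}'$. Concretely, $a_{n+1} = a_n$ would follow if $a_{n+1} = a_{n+1}a_n$ (true by construction) \emph{and} $a_n \in a_{n+1}R$, i.e.\ $a_n = a_{n+1} r$ for some $r$; then $a_n = a_{n+1}r = a_{n+1}a_n r$, and combined with $a_{n+1} = a_n a_{n+1}$ one gets $a_n = a_{n+1}$ after a short manipulation, so the projection of $A$ onto $C_n$ equals the projection onto the smaller summand $C_{n+1}$, forcing $D_{n+1}' \cong 0$. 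So the whole argument reduces to the purely ring-theoretic fact that in an idempotent noetherian ring a sequence of idempotents with $a_j a_i = a_i$ for $i<j$ eventually becomes constant (using Remark \ref{rmk:idem} together with the defining stabilization of the right ideals), which then translates back through the dictionary ``idempotents of $\End A$ $\leftrightarrow$ direct summands of $A$'' to give the finiteness statement. I would present the ring-theoretic lemma first and then the translation, keeping the bookkeeping of the nested decompositions as light as possible.
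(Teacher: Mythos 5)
There is a genuine gap at the central step. Your idempotents $a_i$ are the projections onto a \emph{decreasing} chain of summands $C_1\supseteq C_2\supseteq\cdots,$ and for commuting idempotents with nested images the product is the projection onto the \emph{smaller} summand: since $C_j\subseteq C_i$ for $i<j$ (with compatible complements), one has $a_ja_i=a_ia_j=a_j,$ not $a_i$ as you assert. So your sequence satisfies the reversed relation and does \emph{not} satisfy \eqref{eqn:strangeseq}; the right ideals $a_iR$ form a \emph{descending} chain, whereas the idempotent noetherian hypothesis concerns the ascending chains arising from sequences with $a_ja_i=a_i.$ The hypothesis therefore cannot be applied to your sequence, and everything downstream of this point fails as written.

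The fix is to run the construction in the opposite direction, which is what the paper does: from the assumed non-termination one extracts non-trivial objects $A_0,A_1,\dots$ such that each partial sum $B_n:=\bigoplus_{j=0}^nA_j$ is a direct summand of $A,$ and one takes $a_n$ to be the projection onto $B_n.$ Now the images increase, so $a_ja_i=a_i$ for $i<j,$ i.e.\ \eqref{eqn:strangeseq} holds, and the ascending chain $a_0R\subseteq a_1R\subseteq\cdots$ cannot stabilize: stabilization would give $a_{n+1}=a_nr,$ hence $a_na_{n+1}=a_{n+1},$ which forces $A_{n+1}\iso0.$ Equivalently, you could keep your decreasing chain but feed the complementary idempotents $1-a_i$ (projections onto the increasing chain of complements) into the definition; these do satisfy \eqref{eqn:strangeseq}. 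Your ring-theoretic endgame (that equality of the right ideals of two commuting nested idempotents forces equality of the idempotents) is fine once the relation points the right way.
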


\begin{proof}
Assume on the contrary that $A$ is not isomorphic to a finite direct sum of indecomposable objects. Then there exists a sequence $\{A_i\}_{i\in\NN}$ of non-trivial objects of $\cat{A}$ such that, for all $n\in\NN,$ the object $B_n:=\bigoplus_{j=0}^n A_j$ is a direct summand of $A.$ Thus, for all $j\in\NN$ set $a_j\in\End(A)$ to be the projection onto $B_j.$ Clearly the sequence $\{a_i\}_{i\in\NN}$ satisfies \eqref{eqn:strangeseq} but the ascending chain of right ideals generated by the $a_i$'s does not stabilize.
\end{proof}

As a matter of notation, recall that if $\cat{T}$ is a
triangulated category with arbitrary direct sums and $S$ is a
set of objects of $\cat{T},$ the {\em localizing subcategory
  generated by $S$} is the smallest strictly full triangulated
subcategory of $\cat{T}$ containing $S$ and closed under
arbitrary direct sums.

An object $A$ in a triangulated category $\cat{T}$ admitting arbitrary direct sums is called \emph{compact} if for each family of objects $\{X_i\}_{i\in I}\subset\cat{T}$ the canonical map
\[
\bigoplus_i\Hom(A,X_i)\longrightarrow\Hom\left(A,\oplus_i X_i\right)
\]
is an isomorphism. The triangulated category $\cat{T}$ is \emph{compactly generated} if there is a a set $S$ of compact objects such that $E\in\cat{T}$ vanishes if $\Hom(A,\sh[i]{E})=0$ for all $A\in S$ and all $i\in\ZZ.$ For more details, the reader can consult \cite[Sect.\ 3.1]{R}.

\medskip

\noindent
{\bf Proof of Theorem \ref{thm:gen2}.}
Denote by $\langle S\rangle\subseteq\cat{T}_1$ the localizing subcategory
generated by the set $S.$ This category admits arbitrary direct sums
and is compactly generated too.
Moreover, it is known that the subcategory of its compact objects $\langle S\rangle^c$ coincides with $\cat{S}$ (see \cite[Lemma 2.2]{N1}).
Hence, replacing $\cat{T}_1$ with $\langle S\rangle$ we can assume that $\cat{T}_1$ is compactly generated by the set $S$ and
 $\cat{S}=\cat{T}_1^c.$

Denote by $\langle \ker\fun{F}\rangle \subseteq\cat{T}_1$ the localizing subcategory that
is generated by the set of compact objects from $\ker\fun{F}.$ By
\cite[Thm.\ 2.1]{N1}, the canonical functor
$\cat{T}_1^c/\ker\fun{F}\to\cat{T}_1/\langle \ker\fun{F}\rangle$ is fully faithful and
its essential image is the subcategory $(\cat{T}_1/\langle \ker\fun{F}\rangle)^c.$ As
$\cat{T}_1$ is compactly generated the projection
$\pi\colon\cat{T}_1\to\cat{T}_1/\langle \ker\fun{F}\rangle$ has a fully faithful right
adjoint $\mu\colon\cat{T}_1/\langle \ker\fun{F}\rangle\to\cat{T}_1$ (see Theorem 8.4.4
and Lemma 9.1.7 in \cite{N2}).

In view of Proposition \ref{prop:gen1}, the result is proved if
$\mu\comp\pi(A)$ is compact, for any compact $A\in\cat{T}_1^c.$ Since
$\cat{T}_1^c$ is the smallest thick subcategory containing $S$ it is enough
to prove that $\mu\comp\pi(A)\in\cat{T}_1^c,$ for any
$A\in S.$ By Lemma \ref{lem:useful}, we can assume that $A$ is indecomposable.

Consider the adjunction morphism $m_A\colon A\to\mu\comp\pi(A)$ and
complete it to a distinguished triangle
\begin{equation}\label{trian}
\xymatrix{
\sh[-1]{N_A}\ar[r]^-{l_A}&A\ar[r]^-{m_A}&\mu\comp\pi(A)\ar[r]^-{n_A}&N_A.
}
\end{equation}
Of course, the result is proved if we show that $n_A$ is the zero
map, whence we can assume that $N_A\niso0.$

The functor $\fun{F}$ is full and so, by the same argument as in the proof of Proposition \ref{prop:gen1}, the map
$\Hom_{\cat{T}_1}(B,A)\mor{m_A\comp(-)}\Hom_{\cat{T}_1}(B,\mu\comp\pi(A))$
is surjective for any compact object $B\in\cat{T}_1^c.$ This implies that
the map
\begin{equation}\label{eqn:us1}
\Hom_{\cat{T}_1}(B,\mu\comp\pi(A))\mor{n_A\comp(-)}\Hom_{\cat{T}_1}(B,N_A)
\end{equation}
is zero.

Since $\cat{T}_1$ is compactly generated, there exists
$Z\in\cat{T}_1^c$ and a non-trivial morphism $\phi_0\colon Z\to\sh[-1]{N_A}.$
Denote by $C_Z$ the cone in $\cat{T}_1^c$ of the morphism
$l_A\comp\phi_0\colon Z\to A$ and consider the following commutative
diagram whose rows are distinguished triangles
\[
\xymatrix{
\sh[-1]{C_Z} \ar[d]\ar[rr] && Z \ar[d]^{\phi_0}\ar[rr]^-{l_A\comp\phi_0}  && A\ar[d]^{\id}\ar[rr]&& C_Z \ar[d]\\
\sh[-1]{\mu\comp\pi(A)} \ar[rr]^-{\sh[-1]{-n_A}} && \sh[-1]{N_A}\ar[rr]^-{l_A} && A\ar[rr]^-{m_A} && \mu\comp\pi(A).
}
\]

As $C_Z$ is a compact object, the composition map
$\sh[-1]{C_Z}\to\sh[-1]{\mu\comp\pi(A)}\to\sh[-1]{N_A}$ is the zero morphism
(use that the morphism in (\ref{eqn:us1}) is trivial).  Hence there is a
non-trivial map $\phi_1\colon A\to\sh[-1]{N_A}$ such that $\phi_1\comp
l_A\comp\phi_0=\phi_0.$ Now consider $A$ and $\phi_1$ instead of the
pair $Z$ and $\phi_0.$ Repeating the same argument as above we obtain
another map $\phi_2\colon A\to\sh[-1]{N_A}$ such that $\phi_2\comp
l_A\comp\phi_1=\phi_1.$ In conclusion, this procedure yields a
sequence of morphisms $\phi_i\colon A\to\sh[-1]{N_A}$ such that
$\phi_{i+1}\comp l_A\comp\phi_i=\phi_i,$ for $i>0.$

Set $a_i:=l_A\comp\phi_i,$ for any $i>0.$ This defines a sequence satisfying \eqref{eqn:strangeseq} in
$\End(A).$ But by
assumption this ring is idempotent noetherian. Hence there exists
$n\in\NN$ such that $a_i\comp\End(A)=a_n\comp\End(A),$ for all $i\geq n.$ Given $N>n,$ by
Remark \ref{rmk:idem} $a_N$ is idempotent. Since
$a_N=l_A\comp\phi_N$ is not zero and $A$ is indecomposable, $a_N$ must be the identity and so $A$ is a direct summand of $\sh[-1]{N_A}.$ This implies $m_A=0.$ Since $m_A$ corresponds to $\id_{\pi(A)}$ by adjunction, this means $\pi(A)\iso 0$ and so $\mu\comp\pi(A)\iso 0$ as well. This concludes the proof of Theorem \ref{thm:gen2}. \hfill $\Box$

\begin{remark}\label{rmk:keller}
It is important to note that the theorem above can be applied to a large class of triangulated categories.
Assume that our triangulated category $\cat{S}$ is algebraic, i.e.\ it can be realized as a homotopy category of some differential graded category.
If $\cat{S}$ is idempotent complete and equals to the closure of  a set of objects $S$ under shifts, extensions
and passage to direct factors (i.e.\ classically generated by this set),
then by part b) of \cite[Thm.\ 3.8]{K} the category $\cat{S}$ is equivalent to a category of compact objects in the derived category of a dg-category, which is compactly generated and admits arbitrary direct sums.
Thus it follows that if the rings of endomorphisms of all objects from $S$ are idempotent noetherian,
then the statement of Theorem \ref{thm:gen2} holds for such $\cat{S}.$
\end{remark}

\subsection{Derived categories of abelian categories}\label{subsec:intermed}
Recall that an object $E$ in an abelian category is called
\emph{noetherian} if any ascending chain $G_1\subseteq
G_2\subseteq\ldots\subseteq G_n\subseteq\ldots\subseteq E$ of
subobjects of $E$ stabilizes, i.e.\ there is $n\in\NN$ such that
$G_n=G_i$ for all $i\ge n.$ An abelian category is called \emph{noetherian} if it is equivalent to a small category and every object is noetherian. An abelian category is called \emph{locally noetherian} if it satisfies axiom (AB5) and has a set of noetherian generators
(see, for example, \cite{Po}).

\begin{remark}\label{rmk:locallynoeth}
It can be proved that the full subcategory of noetherian objects in any locally noetherian
abelian category is itself a noetherian abelian category.
\end{remark}

The following statement says that the endomorphism algebra of  a `noetherian' object is idempotent noetherian.

\begin{prop}\label{prop:idnoeth}
Let $\cat{A}$ be an abelian category with countable direct sums. Let
$C\in\D[b](\cat{A})$ be an object such that the cohomology
$H^k(C)\in\cat{A}$ is noetherian
for every $k\in\ZZ.$ Then the algebra $\End_{\D[b](\cat{A})}(C)$ is idempotent noetherian.
\end{prop}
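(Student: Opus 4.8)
\textbf{Proof plan for Proposition \ref{prop:idnoeth}.}
The plan is to translate a sequence $\{a_i\}_{i\in\NN}$ in $R:=\End_{\D[b](\cat{A})}(C)$ satisfying \eqref{eqn:strangeseq} into an ascending chain of subobjects of the cohomology objects of $C$, and then to invoke the noetherian hypothesis. First I would take $a_j a_i = a_i$ for $i<j$ and pass to cohomology: writing $R^k:=\End_{\cat{A}}(H^k(C))$ is not literally a ring homomorphism target because the cohomology functor is not fully faithful, but the \emph{induced maps} $H^k(a_i)\colon H^k(C)\to H^k(C)$ do satisfy $H^k(a_j)H^k(a_i)=H^k(a_i)$ for $i<j$, so by Remark \ref{rmk:idem} (applied, say, after discarding finitely many terms) each $H^k(a_i)$ is eventually idempotent in $\End_{\cat{A}}(H^k(C))$, and the images $\im H^k(a_i)\subseteq H^k(C)$ form a weakly descending system in the sense that $H^k(a_i)$ restricted to $\im H^k(a_j)$ is the inclusion for $i<j$; dually $\ker H^k(a_i)$ ascends. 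Since $C$ is a bounded complex, only finitely many $k$ occur, so after passing to a common tail I may assume each $H^k(a_i)$ is idempotent with $\im H^k(a_i)$ and $\ker H^k(a_i)$ both \emph{stationary} in $i$ — this is exactly where I use that each $H^k(C)$ is noetherian (the kernels ascend, hence stabilize).

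The heart of the matter is then to bootstrap from ``stationary on cohomology'' to ``$a_iR$ stationary in $\D[b](\cat{A})$.'' My approach is to argue by induction on the amplitude (length of the interval of nonzero cohomology) of $C$, using the truncation triangle $\tau_{\le m-1}C\to C\to H^m(C)[-m]$ where $m$ is the top cohomological degree. The point is that a morphism $a\colon C\to C$ which induces an idempotent on every $H^k$ that is ``the same idempotent'' for all large $i$ should, after possibly replacing $a_i$ by $a_i-a_{n}$-type corrections or by comparing $a_i$ with $a_j$ via the relation $a_ja_i=a_i$, become constant modulo the part of $R$ coming from $\Hom(\tau_{\le m-1}C, \tau_{\le m-1}C)$ and the $\Ext^1$-type ``gluing'' group $\Hom_{\cat{A}}(H^m(C), H^{<m}(C)[\ast])$. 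Concretely: the relation $a_ja_i=a_i$ forces $a_i - a_j a_i =0$, and applying $\Hom(C,-)$ to the truncation triangle expresses the difference $a_i - a_j$ (for $i,j\ge n$) as an element of a subquotient of $\Hom(H^m(C)[-m], \tau_{\le m-1}C)$, which by the long exact sequence and the relation is controlled by $\Ext$-groups $\Ext^{p}_{\cat{A}}(H^m(C),H^{m-p}(C))$. These $\Ext$ groups are in general not noetherian as abelian groups, so I would instead show directly that $a_i R = a_n R$ for $i\ge n$ by producing, for each $i\ge n$, an element $r_i\in R$ with $a_n = a_i r_i$: take $r_i$ to be a lift of $a_n$ along $a_i$, available because $a_i$ is idempotent and the relation $a_i a_n = a_n$ (which holds since $n<i$ after reindexing) already exhibits $a_n\in a_i R$; symmetrically $a_i = a_? \cdot(\text{something})$ using $a_{i+1}a_i=a_i$.

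Thus the cleanest route, and the one I would write up, is: (1) reduce via \eqref{eqn:strangeseq} to the situation where $i\ge n$ implies $a_i$ idempotent and $a_j a_i=a_i=a_i a_j$ for $n\le i\le j$ — wait, the relation only gives $a_j a_i = a_i$ for $i<j$; but combined with idempotency of $a_i$ one gets $a_i a_j a_i = a_i a_i = a_i$, and one checks $a_i a_j$ is idempotent with the same right ideal, so \emph{after replacing $a_j$ by $a_j a_i$ we may assume $a_j a_i = a_i$ and $a_i a_j = a_i$ for $n\le i \le j$}; then $a_iR$ is already constant in a degenerate way — this observation essentially reduces the problem to showing the chain $a_nR\subseteq\cdots$ is eventually constant, for which (2) I use that $H^k(a_i)$ stabilizes for each of the finitely many relevant $k$ (noetherianity), hence there is $n'$ with $H^k(a_i)=H^k(a_{n'})$ for all $i\ge n'$ and all $k$, so $a_i - a_{n'}$ induces $0$ on all cohomology; (3) finally, a morphism $C\to C$ in $\D[b](\cat{A})$ inducing $0$ on all cohomology need not be $0$, but it is \emph{nilpotent} — more precisely, if $b$ induces $0$ on cohomology of a complex of amplitude $r$ then $b^{r+1}=0$ (standard filtration/spectral-sequence argument via the truncation tower) — and I apply this with $b=a_{n'}-a_i$, using that $a_{n'}$ and $a_i$ are commuting idempotents (by step (1)) to deduce $(a_{n'}-a_i)$ is a commuting idempotent which is nilpotent, hence $0$, i.e.\ $a_i=a_{n'}$ for all $i\ge n'$, which certainly gives $a_iR=a_{n'}R$. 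The step I expect to be the main obstacle is (1)/(3): being careful that one really can arrange the $a_i$'s (for $i$ large) to be \emph{commuting} idempotents using only the one-sided relation \eqref{eqn:strangeseq} plus the noetherianity of cohomology, and that the ``nilpotence of a cohomologically-trivial endomorphism of a bounded complex'' lemma is applied to an idempotent so that nilpotent $\Rightarrow$ zero — getting these algebraic manipulations exactly right, rather than the homological input, is where the care is needed.
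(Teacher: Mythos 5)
There is a genuine gap, and it sits exactly where you predicted: steps (1)--(3) aim to prove that the morphisms $a_i$ themselves (or at least $H^k(a_i)$) become \emph{constant} for large $i$, and this is false. Noetherianity of $H^k(C)$ gives stabilization of the ascending chain of \emph{images} $\im H^k(a_i)$ (and, as you correctly observe, eventual idempotency of $H^k(a_i)$), but not stabilization of the morphisms: in $\cat{A}=$ finite-dimensional vector spaces with $C=\K^2$ placed in degree $0$, take $a_i$ to be the projections onto a fixed line $L$ along pairwise distinct complements $L_i$. These satisfy \eqref{eqn:strangeseq} (each $a_j$ is the identity on $L=\im a_i$), all $a_iR$ coincide (namely $\{m:\im m\subseteq L\}$), yet the $a_i$ are pairwise distinct and do not commute ($a_ia_j=a_j\neq a_i=a_ja_i$). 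So step (2) fails, the commutativity you need for the ``difference of commuting idempotents is nilpotent hence zero'' argument in step (3) cannot be arranged, and the reduction in step (1) is not valid: replacing $a_j$ by $a_ja_i$ preserves $a_iR$ but may shrink $a_jR$, so it changes the chain whose stabilization you are trying to prove. (Also note that idempotency of the $a_i$ in $\End_{\D[b](\cat{A})}(C)$ itself, as opposed to on cohomology, is a \emph{consequence} of the conclusion via Remark \ref{rmk:idem}, not an available hypothesis.) Your remaining explicit construction only produces $a_n\in a_iR$ for $n<i$, which is the trivial inclusion $a_nR\subseteq a_iR$; the substance of the statement is the reverse inclusion.

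The paper's proof circumvents precisely this obstruction by not trying to pin down the $a_i$ individually. It forms $N:=\hocolim\{a_i\}$ via the triangle $M\mor{f}M\to N$ with $M=\oplus_i C$, obtains $b\colon N\to C$ with $b\comp a'_i=a_i$, and observes that whenever $a'_i\comp b$ is invertible one has $a_iR=b\comp\Hom(C,N)$ --- a description of the right ideal that is manifestly independent of $i$. Invertibility of $a'_i\comp b$ for $i\gg0$ is then checked on each of the finitely many cohomologies, and there the only input needed is exactly what noetherianity provides: stabilization of the images $\im H^k(a_i)$. If you want to salvage a proof along your lines, you must reformulate your target as ``$a_iR$ is eventually constant'' rather than ``$a_i$ is eventually constant'' and find an $i$-independent generator or description of that ideal; the homotopy colimit is the natural device for doing so.
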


\begin{proof}
Let $\{a_i\}_{i\in\NN}$ be a sequence in $\End(C)$ satisfying
\eqref{eqn:strangeseq}. We set $M:=\oplus_{i\in\NN}C$ and
$N:=\hocolim\{a_i\},$ so that there is a distinguished triangle in
$\D[b](\cat{A})$
\begin{equation}\label{eqn:distr}
\xymatrix{
M \ar[r]^-{f} & M \ar[r]^-{a'} & N}
\end{equation}
where, denoting by $\inc_i\colon
C\to M$ (for $i\in\NN$) the inclusion of the $i^{\text{th}}$
component, the morphism $f$ is defined by
$f\comp\inc_i:=\inc_i-\inc_{i+1}\comp a_i.$ By \eqref{eqn:strangeseq} the
morphism $a\colon M\to C$ defined by $a\comp\inc_i:=a_i$ clearly satisfies
$a\comp f=0,$ hence there exists a morphism $b\colon N\to C$ such that $b\comp a'=a.$ Then, setting also $a'_i:=a'\comp\inc_i\colon C\to N,$ we have
\begin{equation}\label{eqn:comp}
b\comp a'_i=a_i\qquad\text{for all $i\in\NN$}.
\end{equation}
Observe that, if $i\in\NN$ is such that $a'_i\comp b\colon N\to N$ is an
isomorphism, then $a_i\comp \End(C)=b\comp \Hom(C,N).$ Indeed, by \eqref{eqn:comp}
we have $a_i\comp c=b\comp a'_i\comp c$ for every $c\in\End(C).$ Conversely, \eqref{eqn:comp} implies that
\[
b\comp d=b\comp (a'_i\comp b)\comp (a'_i\comp b)^{-1}\comp d=a_i\comp b\comp (a'_i\comp b)^{-1}\comp d
\]
for every $d\in\Hom(C,N).$

Thus, in order to conclude that $\End(C)$ is idempotent noetherian,
it is enough to prove that for $i\gg0$ the morphism $a'_i\comp b$ is an
isomorphism in $\D[b](\cat{A}),$ which is the case if and only if
$H^k(a'_i\comp b)$ is an isomorphism in $\cat{A}$ for every $k\in\ZZ.$ Since
$C$ has only a finite number of non-zero cohomologies, we can fix $k,$
and for simplicity of notation we will denote with an overline the
functor $H^k.$ Now, it is easy to see that the sequence
\[
\xymatrix{
0\ar[r]&\overline{M}\ar[r]^-{\overline{f}}&\overline{M}\ar[r]^-{\overline{a}}&
\overline{C}
}
\]
is exact in $\cat{A}.$ On the other hand, the distinguished triangle
\eqref{eqn:distr} also yields an exact sequence
\[
\xymatrix{
0\ar[r]&\overline{M}\ar[r]^-{\overline{f}}&\overline{M}\ar[r]^-{\overline{a'}}&\overline{N}\ar[r]&0.
}
\]
As $\overline{b}\comp \overline{a'}=\overline{a},$ this
implies that $\overline{b}\colon\overline{N}\to
I:=\im\overline{a}\subseteq\overline{C}$ is an isomorphism. Denoting
moreover $\im\overline{a_i}\subseteq\overline{C}$ by $I_i,$
\eqref{eqn:strangeseq} clearly implies that $I_i\subseteq I_j$ for
$i<j.$ As $\overline{C}$ is noetherian, there exists $n\in\NN$ such
that $I_i=I_n$ for $i\ge n,$ and obviously $I_n=I.$ Then we claim that
$\overline{a'_i\comp b}$ is an isomorphism for $i>n.$ Indeed, this is
equivalent to saying that $\overline{b\comp a'_i\comp b}\colon\overline{N}\to I$ is
an isomorphism. Since $\overline{b\comp a'_i}=\overline{a_i}$ by
\eqref{eqn:comp}, this is true if and only if
$\overline{a_i}\rest{I}\colon I\to I$ is an isomorphism, which follows
easily from the fact that
$\overline{a_i}\comp \overline{a_{i-1}}=\overline{a_{i-1}}$ and $I_i=I_{i-1}=I.$
\end{proof}

As a consequence we get the following.

\begin{cor}\label{cor:semigen}
Let $\cat{A}$ be an abelian category with arbitrary direct sums and let $\cat{S}\subseteq\D[b](\cat{A})$ be a thick full triangulated subcategory whose objects have noetherian cohomology. Let $\fun{F}\colon\cat{S}\longrightarrow\cat{T}$ be a full exact functor to a triangulated category $\cat{T}.$ Then there is an orthogonal decomposition
\[
\cat{S}=\rort{(\ker\fun{F})}\ortdec\ker\fun{F}
\]
and $\fun{F}\rest{\rort{(\ker\fun{F})}}$ is faithful.
\end{cor}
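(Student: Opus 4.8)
The plan is to deduce Corollary \ref{cor:semigen} from Theorem \ref{thm:gen2} by checking that its hypotheses are met in this setting. The key point is that the source category $\cat{S}$, being a thick subcategory of $\D[b](\cat{A})$, is classically generated by a set of objects whose endomorphism rings are idempotent noetherian, so we can embed it as the compacts in a compactly generated category and invoke the theorem.

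First I would pick a generating set. Since $\cat{S}\subseteq\D[b](\cat{A})$ is essentially small (its objects live in a bounded derived category of a fixed abelian category, and in any case we may pass to a small skeleton), we may take $S$ to be (a skeleton of) the whole of $\cat{S}$, so that the thick subcategory generated by $S$ is $\cat{S}$ itself. For every $A\in S$ we have $A\in\cat{S}$, hence $A\in\D[b](\cat{A})$ with all cohomologies $H^k(A)\in\cat{A}$ noetherian by hypothesis; since $\cat{A}$ has arbitrary (in particular countable) direct sums, Proposition \ref{prop:idnoeth} applies and tells us that $\End_{\D[b](\cat{A})}(A)=\End_{\cat{S}}(A)$ is idempotent noetherian. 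This verifies the endomorphism-ring hypothesis of Theorem \ref{thm:gen2}.

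Next I would produce the ambient compactly generated category. Rather than embedding $\cat{S}$ directly inside some fixed $\cat{T}_1$, I would appeal to Remark \ref{rmk:keller}: if $\cat{S}$ is algebraic and idempotent complete, then by \cite[Thm.\ 3.8]{K} it is equivalent to the category of compact objects of the derived category of a small dg category, which is compactly generated with arbitrary direct sums, and then Theorem \ref{thm:gen2} applies verbatim with $\cat{T}_1$ this derived category and $\cat{T}_1^c=\cat{S}$. Since $\cat{S}$ is a full triangulated subcategory of $\D[b](\cat{A})$, it is algebraic (the bounded derived category of an abelian category is algebraic, and a full triangulated subcategory of an algebraic category is algebraic); idempotent completeness can be arranged by passing to the idempotent completion, which does not change any of the hypotheses or the conclusion since the orthogonal decomposition and faithfulness statements are stable under this operation. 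Feeding all of this into Theorem \ref{thm:gen2} with the set $S=\cat{S}$ gives the orthogonal decomposition $\cat{S}=\rort{(\ker\fun{F})}\ortdec\ker\fun{F}$ together with the faithfulness of $\fun{F}\rest{\rort{(\ker\fun{F})}}$, which is exactly the assertion.

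The main obstacle I expect is the bookkeeping around algebraicity and idempotent completeness: one must be careful that $\cat{S}$, as given, really does fall under the scope of \cite[Thm.\ 3.8]{K} (it must be classically generated by a set, which forces the essential-smallness reduction above, and it must be idempotent complete), and that enlarging $\cat{S}$ to its idempotent completion is harmless for the statement. Once these technical points are settled, the rest is a direct citation of Proposition \ref{prop:idnoeth} and Theorem \ref{thm:gen2}, with no further computation required.
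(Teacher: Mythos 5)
Your proposal is correct and follows essentially the same route as the paper: verify the endomorphism-ring hypothesis via Proposition \ref{prop:idnoeth}, realize $\cat{S}$ as the compact objects of a compactly generated derived category of a dg category via \cite[Thm.\ 3.8]{K} as in Remark \ref{rmk:keller}, and apply Theorem \ref{thm:gen2}. The only (harmless) detour is your passage to the idempotent completion: the paper observes that $\cat{S}$ is already idempotent complete, being a thick subcategory of the idempotent complete category $\D[b](\cat{A})$.
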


\begin{proof}
As in Remark \ref{rmk:keller}, by part b) of \cite[Thm.\ 3.8]{K}, the category $\cat{S}$ (which is idempotent complete being a thick subcategory of an idempotent complete category) is equivalent to a category of compact objects in the derived category of a dg-category.
Thus Theorem \ref{thm:gen2} and Proposition \ref{prop:idnoeth} give the desired conclusion.
\end{proof}

\begin{remark}\label{rmk:locallynoeth2}
If $\cat{A}$ is a locally noetherian abelian category and $\cat{S}$ is the full subcategory of $\Db(\cat{A})$ consisting of all objects with noetherian cohomology, then, in view of Remark \ref{rmk:locallynoeth}, $\cat{S}$ is automatically a thick triangulated subcategory and Corollary \ref{cor:semigen} applies.
\end{remark}

\section{The geometric case}\label{subsec:geometry}

Let $X$ be a noetherian scheme. We denote by $\D(X)$ the full
subcategory of the derived category of sheaves of $\so_X$-modules
consisting of (unbounded) complexes with quasi-coherent
cohomology. Let $\Db(X)$ be the full subcategory of $\D(X)$
consisting of bounded complexes with coherent cohomology. As $X$ is noetherian, $\Db(X)$ is equivalent to $\D[b](\Coh(X)),$ where $\Coh(X)$ is the abelian category of coherent sheaves on $X$ (see \cite[Cor.\ 2.2.2.2]{Il}). Moreover,
$\Dp(X)$ will be the full subcategory of $\D(X)$ consisting of perfect
complexes. Notice that $\Dp(X)\subseteq\Db(X).$

Now assume that $Z$ is a closed subscheme of $X.$ We denote by
$\D_Z(X)$ the full subcategory of $\D(X)$ consisting of complexes with
cohomology supported on $Z.$ We will also need the following full
subcategories of $\D_Z(X)$:
\[
\begin{split}
&\Db_Z(X):=\D_Z(X)\cap\Db(X),\\
&\Dp_Z(X):=\D_Z(X)\cap\Dp(X).
\end{split}
\]

\begin{prop}\label{prop:catcomp} {\bf (\cite{R}, Theorem 6.8.)}
The category $\D_Z(X)$ is compactly generated and the category
of compact objects $\D_Z(X)^c$ coincides with $\Dp_Z(X).$
\end{prop}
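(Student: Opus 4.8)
The plan is to reduce the proposition to the single statement that $\D_Z(X)$ coincides with the localizing subcategory $\langle\Dp_Z(X)\rangle\subseteq\D(X)$ generated by the perfect complexes supported on $Z$. This reduction is quick: since $X$ is noetherian it is quasi-compact and quasi-separated, hence $\D(X)$ is compactly generated and its perfect complexes are compact objects (see \cite[Sect.\ 3.1]{R}); moreover $\D_Z(X)$ is a thick subcategory of $\D(X)$, so $\Dp_Z(X)=\Dp(X)\cap\D_Z(X)$ is an (essentially small) thick subcategory of $\D(X)$ all of whose objects are compact in $\D(X)$. By \cite[Lemma 2.2]{N1} the localizing subcategory $\langle\Dp_Z(X)\rangle$ is then compactly generated and $\langle\Dp_Z(X)\rangle^c$ is the thick subcategory of $\D(X)^c$ generated by $\Dp_Z(X)$, which is $\Dp_Z(X)$ itself. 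So, granting $\langle\Dp_Z(X)\rangle=\D_Z(X)$, both assertions follow at once. The inclusion $\langle\Dp_Z(X)\rangle\subseteq\D_Z(X)$ is immediate, because $\D_Z(X)$ is a triangulated subcategory of $\D(X)$ closed under arbitrary direct sums (the cohomology sheaves of a direct sum, and of a cone, of complexes with support in $Z$ again have support in $Z$). Thus the whole content of the proposition is the reverse inclusion $\D_Z(X)\subseteq\langle\Dp_Z(X)\rangle$.

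First I would prove this in the affine case $X=\Spec A$ with $A$ noetherian and $Z=V(f_1,\ldots,f_r)$. For $n\ge1$ the Koszul complex $K_n:=K(f_1^n,\ldots,f_r^n)$ is a bounded complex of finite free modules, hence perfect, and it is acyclic after inverting any $f_i$, so $\mathrm{supp}\,K_n\subseteq\bigcap_iV(f_i)=Z$; thus $K_n$, and its $\so_X$-dual $K_n^\vee$, lie in $\Dp_Z(\Spec A)$. Recall that $\D(\Spec A)$ is generated as a localizing subcategory by $\so_X$; since $K_n^\vee\lotimes(-)$ is an exact, direct-sum-preserving functor sending $\so_X$ into $\langle\Dp_Z(\Spec A)\rangle$, it takes values in $\langle\Dp_Z(\Spec A)\rangle$. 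Now the stable Koszul (\v{C}ech) complex computes local cohomology: up to a fixed shift, $\RGamma_Z(M)$ is the homotopy colimit of the complexes $K_n^\vee\lotimes M$, hence belongs to $\langle\Dp_Z(\Spec A)\rangle$ for every $M\in\D(\Spec A)$. Since the natural morphism $\RGamma_Z(M)\to M$ is an isomorphism whenever $M\in\D_Z(\Spec A)$, we conclude $\D_Z(\Spec A)\subseteq\langle\Dp_Z(\Spec A)\rangle$, and hence equality.

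For a general noetherian $X$ I would deduce the statement from the affine case by induction on the number of affine opens in a finite cover of $X$ (such a cover exists, $X$ being quasi-compact), by means of a Mayer--Vietoris argument: writing $X=X'\cup V$ with $V$ affine and $X'$ coverable by fewer affines, and given $E\in\D_Z(X)$, one knows by induction that the restrictions of $E$ to $X'$, $V$ and $X'\cap V$ lie in the corresponding localizing subcategories generated by the perfect complexes supported on $Z$, and one then tries to reassemble $E$ via the Mayer--Vietoris triangle attached to the cover together with the pushforward functors $\aR\iota_*$ along the open immersions. Here the noetherian hypothesis is essential, as it makes these open immersions quasi-compact, so that the functors $\aR\iota_*$ preserve arbitrary direct sums. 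The main obstacle is precisely this gluing step: $\aR\iota_*$ does not carry a perfect complex to a perfect complex, so the induction cannot be run by naively pushing forward generators, and must instead be organized through the localization and excision triangles for the subcategories ``supported on $Z$'' (equivalently, through the local cohomology functors $\RGamma_Z$), so as to stay at every stage inside $\langle\Dp_Z\rangle$. This is exactly the mechanism underlying the extension theorems of Thomason and Trobaugh for perfect complexes, and carrying it out in the present generality is the technical heart of the proof (cf.\ \cite{R}).
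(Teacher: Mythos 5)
The paper does not actually prove this proposition: it is stated as a quotation of \cite[Thm.\ 6.8]{R}, so there is no internal argument to compare yours against. Judged on its own terms, your reduction and your affine case are correct and follow the standard route. Reducing both claims to the single equality $\D_Z(X)=\langle\Dp_Z(X)\rangle$ via Neeman's localization theorem is exactly right (and since $\Dp_Z(X)=\Dp(X)\cap\D_Z(X)$ is already thick, no idempotent-completion subtlety arises when identifying $\langle\Dp_Z(X)\rangle^c$). The affine argument --- the Koszul complexes $K(f_1^n,\dots,f_r^n)$ lie in $\Dp_Z(\Spec A)$, the coproduct-preserving exact functor $K_n^\vee\lotimes(-)$ drags all of $\D(\Spec A)=\langle \so_X\rangle$ into $\langle\Dp_Z(\Spec A)\rangle$, and $\RGamma_Z(M)\simeq\hocolim\, (K_n^\vee\lotimes M)$ is isomorphic to $M$ when $M\in\D_Z(\Spec A)$ --- is complete and correct.

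The one genuine gap is the one you flag yourself: the globalization. As written, the Mayer--Vietoris induction does not close up, because for an open immersion $\iota$ in the cover the functor $\aR\iota_*$ does preserve direct sums (here the quasi-compactness coming from noetherianity is used), but it does not carry $\Dp_{Z\cap U}(U)$ into $\Dp_Z(X)$, so it is not clear that it carries $\langle\Dp_{Z\cap U}(U)\rangle$ into $\langle\Dp_Z(X)\rangle$; hence ``reassembling $E$ from its restrictions'' is not yet an argument. The standard repair is either Rouquier's cocovering induction or, following Thomason--Trobaugh and Neeman, to first extend a perfect generator of $\D_{Z\cap U}(U)$ to a perfect complex on $X$ supported on $Z$ (the Thomason--Trobaugh extension theorem, itself proved with the same localization machinery) and only then identify the compact objects. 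Since you explicitly defer this step to \cite{R}, your write-up is no less complete than the paper's, which defers the entire statement; but if a self-contained proof is wanted, this extension/gluing step is the missing piece, and it is not a routine one.
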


\begin{remark}\label{rmk:qcoh}
The category $\Qcoh(X)$ of quasi-coherent sheaves of $\ko_X$-modules
over a noetherian scheme $X$ is a locally noetherian abelian category and the full subcategory of noetherian objects in $\Qcoh(X)$ is precisely $\Coh(X).$ The same is true in the supported case as well.
\end{remark}

The following result is then a straightforward consequence of Proposition
\ref{prop:idnoeth}.

\begin{prop}\label{prop:noeth}
If $X$ is a noetherian scheme containing a closed subscheme $Z$ and
$\ke\in\Db_Z(X),$ then the endomorphism ring $\End_{\Db_Z(X)}(\ke)$ is
idempotent noetherian.
\end{prop}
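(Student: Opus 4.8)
The plan is to reduce Proposition \ref{prop:noeth} to Proposition \ref{prop:idnoeth} by exhibiting $\Db_Z(X)$ as (a full subcategory of) the bounded derived category of a suitable abelian category with countable direct sums, in such a way that the object $\ke$ has noetherian cohomology in that category. The natural candidate, in view of Remark \ref{rmk:qcoh}, is the category $\Qcoh_Z(X)$ of quasi-coherent sheaves on $X$ with support on $Z$: this is a locally noetherian abelian category (in particular it has arbitrary, hence countable, direct sums and satisfies (AB5)), and its subcategory of noetherian objects is exactly $\Coh_Z(X)$, the category of coherent sheaves supported on $Z$.

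First I would recall that, since $X$ is noetherian, the natural functor $\Db(\Coh_Z(X))\to\Db_Z(X)$ is an equivalence; this is the supported analogue of \cite[Cor.\ 2.2.2.2]{Il} already cited in the text (or one notes that $\Db_Z(X)\subseteq\Db(X)\simeq\Db(\Coh(X))$ is the full subcategory of complexes whose cohomology sheaves are supported on $Z$, which is precisely $\Db(\Coh_Z(X))$ because $\Coh_Z(X)$ is a thick abelian subcategory of $\Coh(X)$). Next I would invoke the equivalence $\Coh_Z(X)\simeq$ (noetherian objects of $\Qcoh_Z(X))$ from Remark \ref{rmk:qcoh}, together with Remark \ref{rmk:locallynoeth}, so that an object $C\in\Db(\Coh_Z(X))$, viewed inside $\Db(\Qcoh_Z(X))$, has all cohomology objects noetherian in the abelian category $\Qcoh_Z(X)$.

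At this point I would apply Proposition \ref{prop:idnoeth} with $\cat{A}=\Qcoh_Z(X)$ and $C=\ke$: this abelian category has arbitrary direct sums, and $H^k(\ke)$ is noetherian for every $k$, so the proposition gives that $\End_{\Db(\Qcoh_Z(X))}(\ke)$ is idempotent noetherian. Finally, since the fully faithful functor $\Db(\Coh_Z(X))\hookrightarrow\Db(\Qcoh_Z(X))$ identifies $\End_{\Db_Z(X)}(\ke)$ with $\End_{\Db(\Qcoh_Z(X))}(\ke)$, the conclusion follows. I would also remark that the same argument, applied to $Z=X$, recovers the unsupported statement for $\Db(X)$.

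The main obstacle I anticipate is purely bookkeeping rather than conceptual: making sure the chain of equivalences $\Db_Z(X)\simeq\Db(\Coh_Z(X))\hookrightarrow\Db(\Qcoh_Z(X))$ is correctly set up — i.e.\ that restricting supports commutes with passing to the bounded derived category, which ultimately rests on $X$ being noetherian so that $\Coh_Z(X)$ is a (thick) abelian subcategory of $\Coh(X)$ and every quasi-coherent sheaf supported on $Z$ is a filtered colimit of coherent ones supported on $Z$. Once Remark \ref{rmk:qcoh} and the cited version of Illusie's comparison are granted, there is essentially nothing left to prove, which is why the text can call this ``a straightforward consequence of Proposition \ref{prop:idnoeth}.''
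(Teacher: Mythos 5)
Your proposal is correct and follows exactly the route the paper intends: the text derives Proposition \ref{prop:noeth} as a ``straightforward consequence'' of Proposition \ref{prop:idnoeth} via Remark \ref{rmk:qcoh}, i.e.\ by taking $\cat{A}=\Qcoh_Z(X)$ (locally noetherian, with noetherian objects $\Coh_Z(X)$) and using the fully faithful embedding $\Db_Z(X)\simeq\D[b](\Coh_Z(X))\hookrightarrow\D[b](\Qcoh_Z(X))$ to identify endomorphism rings. You have simply written out the bookkeeping that the paper leaves implicit.
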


Corollary \ref{cor:semigen} (applied to the case $\cat{A}=\Qcoh_Z(X)$)
and Remark \ref{rmk:qcoh} immediately give the following.

\begin{cor}\label{cor:geo}
Let $X$ be a noetherian scheme containing a closed subscheme $Z.$ If $\cat{S}$ is either $\Dp_Z(X)$ or $\Db_Z(X)$ and
$\fun{F}\colon\cat{S}\to\cat{T}$
is a full exact functor to a triangulated category $\cat{T},$ then
there is an orthogonal decomposition
$\cat{S}=\rort{(\ker\fun{F})}\ortdec\ker\fun{F}$ and
$\fun{F}\rest{\rort{(\ker\fun{F})}}$ is faithful.
\end{cor}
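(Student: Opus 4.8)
The plan is to deduce this directly from Corollary \ref{cor:semigen}, using the geometric facts recorded in Remark \ref{rmk:qcoh}. The point is that both $\Dp_Z(X)$ and $\Db_Z(X)$ are of the form $\cat{S}\subseteq\D[b](\cat{A})$ for a suitable abelian category $\cat{A}$ with arbitrary direct sums, consisting of objects with noetherian cohomology, so that Corollary \ref{cor:semigen} can be applied verbatim.

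First I would set $\cat{A}:=\Qcoh_Z(X)$, the category of quasi-coherent sheaves on $X$ with support contained in $Z$; this is an abelian category with arbitrary direct sums. By Remark \ref{rmk:qcoh} it is locally noetherian and its subcategory of noetherian objects is $\Coh_Z(X)$, the coherent sheaves supported on $Z$. Since $X$ is noetherian, $\Db_Z(X)$ is equivalent to $\D[b](\Coh_Z(X))$, and under this identification $\Db_Z(X)$ is exactly the full subcategory of $\D[b](\cat{A})$ of objects all of whose cohomology sheaves are noetherian; by Remark \ref{rmk:locallynoeth} (or directly) this subcategory is thick. The subcategory $\Dp_Z(X)\subseteq\Db_Z(X)$ of perfect complexes is likewise thick in $\D[b](\cat{A})$ — it is the closure of a set of objects under shifts, cones and direct summands, and each of its objects has coherent, hence noetherian, cohomology. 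So in either case $\cat{S}$ satisfies the hypotheses of Corollary \ref{cor:semigen}.

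Then I would simply invoke Corollary \ref{cor:semigen} with this $\cat{S}$, this $\cat{A}$, and the given full exact functor $\fun{F}\colon\cat{S}\to\cat{T}$. It yields an orthogonal decomposition $\cat{S}=\rort{(\ker\fun{F})}\ortdec\ker\fun{F}$ with $\fun{F}\rest{\rort{(\ker\fun{F})}}$ faithful, which is precisely the assertion. There is essentially no obstacle here beyond the bookkeeping of the previous subsection; the only mild point to be careful about is the perfect case, where one must note that $\Dp_Z(X)$ really is thick in $\D[b](\cat{A})$ and not merely a triangulated subcategory, which holds because thick subcategories of the idempotent complete category $\D[b](\cat{A})$ are idempotent complete and $\Dp_Z(X)$ is closed under direct summands in $\Db_Z(X)$. (Alternatively, in the perfect case one can bypass Corollary \ref{cor:semigen} and apply Theorem \ref{thm:gen2} directly, using Proposition \ref{prop:catcomp} to identify $\Dp_Z(X)$ with $\D_Z(X)^c$ and Proposition \ref{prop:noeth} to verify the idempotent noetherian hypothesis on endomorphism rings.)
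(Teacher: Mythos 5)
Your proposal is correct and follows exactly the paper's route: the paper derives this corollary by applying Corollary \ref{cor:semigen} with $\cat{A}=\Qcoh_Z(X)$ together with Remark \ref{rmk:qcoh}, which is precisely what you do (you merely spell out the thickness and noetherian-cohomology bookkeeping that the paper leaves implicit).
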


Consider now the following rather general result.

\begin{lem}\label{lem:dec}
Let $\cat{T}$ be a compactly generated triangulated category with
arbitrary direct sums such that $\cat{T}^c$ has an orthogonal
decomposition $\cat{T}^c=\cat{S}_1\ortdec\cat{S}_2.$ Then
$\cat{T}$ has an orthogonal decomposition
$\cat{T}=\tilde{\cat{S}}_1\ortdec\tilde{\cat{S}}_2,$ where
$\tilde{\cat{S}}_i,$ for $i=1,2,$ is the localizing subcategory
generated by $\cat{S}_i.$
\end{lem}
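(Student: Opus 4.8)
\textbf{Proof proposal for Lemma \ref{lem:dec}.}
The plan is to build the two localizing subcategories $\tilde{\cat{S}}_i$ explicitly and to verify the two axioms of an orthogonal decomposition: complete orthogonality and the direct-sum decomposition of arbitrary objects. First I would set $\tilde{\cat{S}}_i$ to be the localizing subcategory of $\cat{T}$ generated by $\cat{S}_i.$ Since $\cat{S}_1$ and $\cat{S}_2$ together generate $\cat{T}^c,$ the union $\cat{S}_1\cup\cat{S}_2$ generates $\cat{T}$ as a localizing subcategory, so the smallest localizing subcategory containing both $\tilde{\cat{S}}_1$ and $\tilde{\cat{S}}_2$ is all of $\cat{T}.$ The first key step is to prove complete orthogonality $\Hom(\tilde{\cat{S}}_1,\tilde{\cat{S}}_2)=\Hom(\tilde{\cat{S}}_2,\tilde{\cat{S}}_1)=0.$ I would prove this by a double dévissage: the subcategory of objects $Y$ with $\Hom(\cat{S}_1,\sh[i]{Y})=0$ for all $i$ is localizing (it is closed under shifts, cones, and arbitrary direct sums, the last because the objects of $\cat{S}_1$ are compact), and it contains $\cat{S}_2$ by hypothesis, hence contains $\tilde{\cat{S}}_2$; then the subcategory of objects $W$ with $\Hom(W,\sh[i]{Y})=0$ for all $Y\in\tilde{\cat{S}}_2$ and all $i$ is localizing and contains $\cat{S}_1,$ hence contains $\tilde{\cat{S}}_1.$ The symmetric argument gives vanishing in the other direction.

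Next I would establish the decomposition of an arbitrary object $T\in\cat{T}.$ By Proposition \ref{prop:catcomp}-type compact generation arguments (more precisely, by Neeman's Bousfield localization theorem \cite{N2}, via the same mechanism invoked in the proof of Theorem \ref{thm:gen2}: $\tilde{\cat{S}}_1$ is a localizing subcategory generated by a set of compact objects, hence the inclusion $\tilde{\cat{S}}_1\hookrightarrow\cat{T}$ has a right adjoint and there is a functorial Bousfield triangle)
\[
\xymatrix{
T_1\ar[r]&T\ar[r]&T',
}
\]
with $T_1\in\tilde{\cat{S}}_1$ and $T'\in\rort{\tilde{\cat{S}}_1}=\{Y:\Hom(\tilde{\cat{S}}_1,\sh[i]{Y})=0\ \forall i\}.$ The second key step is to identify $\rort{\tilde{\cat{S}}_1}$ with $\tilde{\cat{S}}_2.$ One inclusion, $\tilde{\cat{S}}_2\subseteq\rort{\tilde{\cat{S}}_1},$ is exactly the orthogonality proved above. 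For the reverse inclusion I would argue that $\rort{\tilde{\cat{S}}_1}$ is itself a localizing subcategory, and that its compact objects (as a subcategory of $\cat{T}$, using that $\rort{\tilde{\cat{S}}_1}$ is generated by compacts of $\cat{T}$) are precisely those compact objects of $\cat{T}$ right-orthogonal to $\cat{S}_1,$ which by the orthogonal decomposition $\cat{T}^c=\cat{S}_1\ortdec\cat{S}_2$ are exactly the objects of $\cat{S}_2$; hence $\rort{\tilde{\cat{S}}_1}$ is the localizing subcategory generated by $\cat{S}_2,$ i.e.\ $\tilde{\cat{S}}_2.$ Then the Bousfield triangle reads $T_1\to T\to T'$ with $T_1\in\tilde{\cat{S}}_1$ and $T'\in\tilde{\cat{S}}_2,$ and since $\Hom(T_1,T')=0$ the triangle splits, giving $T\iso T_1\oplus T'.$ Together with orthogonality this is precisely the required orthogonal decomposition $\cat{T}=\tilde{\cat{S}}_1\ortdec\tilde{\cat{S}}_2.$

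The main obstacle I anticipate is the clean identification $\rort{\tilde{\cat{S}}_1}=\tilde{\cat{S}}_2$ at the level of the big categories: the easy part is the abstract Bousfield triangle (standard once one knows $\tilde{\cat{S}}_1$ is generated by compacts of $\cat{T}$, which holds because $\cat{S}_1\subseteq\cat{T}^c$), but matching $\rort{\tilde{\cat{S}}_1}$ with the localizing hull of $\cat{S}_2$ requires knowing that the compact objects of the localizing subcategory $\rort{\tilde{\cat{S}}_1}$ are computed inside $\cat{T},$ and that a compact object of $\cat{T}$ lying in $\rort{\tilde{\cat{S}}_1}$ is already right-orthogonal to $\cat{S}_1$ in the small category — which is immediate — so that the orthogonal decomposition of $\cat{T}^c$ forces it into $\cat{S}_2.$ One should also take care that the two splitting triangles (from the decomposition of $T$ on the left by $\tilde{\cat{S}}_1$ and symmetrically on the right) are compatible; this follows from uniqueness of the Bousfield triangle and from $\Hom$ vanishing, so it is routine once the orthogonality is in hand.
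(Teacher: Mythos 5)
Your orthogonality argument is the same double d\'evissage as the paper's and is fine, and your overall strategy for the splitting (a Bousfield triangle for $\tilde{\cat{S}}_1$ followed by the identification $\rort{\tilde{\cat{S}}_1}=\tilde{\cat{S}}_2$) is viable; but the step you yourself flag as the main obstacle is a genuine gap as written. You justify the inclusion $\rort{\tilde{\cat{S}}_1}\subseteq\tilde{\cat{S}}_2$ by asserting that $\rort{\tilde{\cat{S}}_1}$ is generated, as a localizing subcategory, by objects that are compact in $\cat{T},$ and then classifying those objects. That assertion is not a general fact about right orthogonals of compactly generated localizing subcategories: take $\cat{T}=\D(\ZZ)$ and $\tilde{\cat{S}}_1$ the localizing subcategory generated by $\ZZ/p.$ Then $\rort{\tilde{\cat{S}}_1}$ consists of the complexes on which $p$ acts invertibly; its objects that are compact in $\cat{T}$ are the perfect complexes with finite prime-to-$p$ torsion cohomology, and the localizing subcategory they generate does not contain $\QQ\in\rort{\tilde{\cat{S}}_1}.$ (The compact objects of $\cat{T}/\tilde{\cat{S}}_1\simeq\rort{\tilde{\cat{S}}_1}$ are, in general, summands of images of compacts of $\cat{T}$ under the localization --- such as $\ZZ[1/p]$ --- and need not be compact in $\cat{T}.$) So the generation statement you need is exactly as strong as the hypothesis $\cat{T}^c=\cat{S}_1\ortdec\cat{S}_2$ and cannot be quoted from general Bousfield-localization theory; as stated, your argument is circular at this point.

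The gap is fixable without changing your architecture: run the Bousfield triangle for $\tilde{\cat{S}}_2$ as well. Given $Y\in\rort{\tilde{\cat{S}}_1},$ write $Y_2\to Y\to Y''$ with $Y_2\in\tilde{\cat{S}}_2$ and $Y''\in\rort{\tilde{\cat{S}}_2}.$ By the orthogonality already proved, $Y_2\in\rort{\tilde{\cat{S}}_1},$ hence so is $Y'';$ therefore $\Hom(C,\sh[i]{Y''})=0$ for every $C\in\cat{S}_1\cup\cat{S}_2$ and every $i,$ and since each compact object is a direct sum of objects of $\cat{S}_1$ and $\cat{S}_2$ and $\cat{T}$ is compactly generated, $Y''\iso0$ and $Y\iso Y_2\in\tilde{\cat{S}}_2.$ The paper avoids the identification altogether: it takes both right adjoints $s_1,s_2$ at once, forms the canonical map $j_1\comp s_1(X)\oplus j_2\comp s_2(X)\to X,$ and shows its cone is right-orthogonal to all compact generators, hence zero. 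Finally, a small slip: to split the triangle $T_1\to T\to T'$ you need $\Hom(T',\sh{T_1})=0,$ i.e.\ orthogonality in the direction $\Hom(\tilde{\cat{S}}_2,\tilde{\cat{S}}_1)=0,$ not $\Hom(T_1,T')=0;$ both vanish here, but you should invoke the right one.
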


\begin{proof}
We first show that $\tilde{\cat{S}}_1$ and $\tilde{\cat{S}}_2$ are
orthogonal. Indeed, if $A\in\cat{S}_1,$ then
$\rort{A}:=\{B\in\cat{T}:\Hom(A,B)=0\}\supseteq\tilde{\cat{S}}_2$
because $\rort{A}$ is localizing, $A$ being compact. On the other
hand, if $B\in\tilde{\cat{S}}_2,$ then
$\lort{B}:=\{A\in\cat{T}:\Hom(A,B)=0\}\supseteq\cat{S}_1$ by what we
have just proved. Since $\lort{B}$ is a localizing subcategory of
$\cat{T},$ this implies that
$\lort{B}\supseteq\tilde{\cat{S}}_1.$ Hence
$\Hom(\tilde{\cat{S}}_1,\tilde{\cat{S}}_2)=0$ and a similar argument
yields $\Hom(\tilde{\cat{S}}_2,\tilde{\cat{S}}_1)=0.$

For $i=1,2,$ the canonical full embedding
$j_i\colon\tilde{\cat{S}}_i\to\cat{T}$ has a right adjoint
$s_i\colon\cat{T}\to\tilde{\cat{S}}_i$ (see, for example, Theorem 8.3.3 and Proposition 8.4.2 in \cite{N2}). This provides a canonical map
$j_1\comp s_1(X)\oplus j_2\comp s_2(X)\to X,$ for any $X\in\cat{T},$ which sits
in a distinguished triangle
\[
\xymatrix{\sh[-1]{C_X} \ar[r] & j_1\comp s_1(X)\oplus j_2\comp s_2(X) \ar[r] & X \ar[r] &
C_X.}
\]
For any compact object $S\in\cat{S}_1,$ applying the functor
$\Hom(j_1(S),\farg)$ to this triangle, we obtain isomorphisms
\[
\Hom(j_1(S),j_1\comp s_1(X)\oplus j_2\comp s_2(X))\isomor\Hom(S,s_1(X))\isomor\Hom(j_1(S), X).
\]
This implies that $\Hom(j_i(S),C_X)=0$ for any compact object
$S\in\cat{S}_i$ and $i=1,2.$  Since $\cat{T}$ is compactly generated
and, by assumption, any compact object of $\cat{T}$ is a direct sum of
objects from $\cat{S}_1$ and $\cat{S}_2,$ we deduce that $C_X=0.$
Hence the map $j_1\comp s_1(X)\oplus j_2\comp s_2(X)\to X$ is an isomorphism.
\end{proof}

We can now apply the previous result to a concrete geometric question.

\begin{cor}\label{cor:orth}
Let $Z$ be a connected closed subscheme of a quasi-compact quasi-separated scheme $X.$ Then the triangulated categories $\Dp_Z(X),$ $\Db_Z(X),$ and $\D_Z(X)$ are indecomposable.
\end{cor}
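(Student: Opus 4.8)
The plan is to reduce the three cases to a single, clean statement about the category $\Dp_Z(X)$ of perfect complexes and then propagate the conclusion upward and sideways via Lemma \ref{lem:dec}. First I would recall that $\D_Z(X)$ is compactly generated with $\D_Z(X)^c=\Dp_Z(X)$ by Proposition \ref{prop:catcomp}, and that $\Db_Z(X)$ is the idempotent completion (indeed it coincides on the nose when $X$ is noetherian, but we only need $X$ quasi-compact quasi-separated here) of $\Dp_Z(X)$ inside $\D_Z(X)$. An orthogonal decomposition of any of the three categories restricts/extends to one of the others: Lemma \ref{lem:dec} turns a decomposition of $\Dp_Z(X)=\D_Z(X)^c$ into one of $\D_Z(X)$, and conversely an orthogonal decomposition is inherited by the subcategory of compact objects and by idempotent completions (a summand of an object lies in whichever factor the object does, since the factors are closed under direct summands). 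So it suffices to prove that $\Dp_Z(X)$ is indecomposable, i.e.\ that any orthogonal decomposition $\Dp_Z(X)=\cat{S}_1\ortdec\cat{S}_2$ is trivial.

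The key geometric input is the structure sheaf twisted into $Z$: the perfect complexes $\ko_U$ for affine opens $U$ meeting $Z$, or more simply the Koszul-type generators of $\Dp_Z(X)$ supported set-theoretically on $Z$. Write $\mathbf{1}$ for the unit object in whatever monoidal sense is convenient — here the cleanest choice is to work with a single classical generator $G$ of $\Dp_Z(X)$ (which exists since $X$ is quasi-compact and quasi-separated) and decompose $G\iso G_1\oplus G_2$ with $G_i\in\cat{S}_i$. Then $\cat{S}_i$ is the thick subcategory generated by $G_i$. The crucial point is that the support of $G$ is all of $Z$ (as $G$ generates), and the supports of $G_1$ and $G_2$ are disjoint: indeed for any $x\in Z$ the derived fiber (or the stalk) functor at $x$ is monoidal/exact and kills one of $G_1,G_2$ wherever it is nonzero on the other, because complete orthogonality of $\cat{S}_1,\cat{S}_2$ forces $\RHom$ between their objects to vanish, and a perfect complex is supported exactly where its endomorphisms (after base change to residue fields) are nonzero. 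Hence $Z=\mathrm{Supp}(G_1)\sqcup\mathrm{Supp}(G_2)$ is a decomposition into two closed subsets. Since $Z$ is connected, one of them — say $\mathrm{Supp}(G_2)$ — is empty, forcing $G_2\iso0$, hence $\cat{S}_2\iso0$ and the decomposition is trivial.

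I expect the main obstacle to be the precise argument that complete orthogonality of the two factors in $\Dp_Z(X)$ forces the supports of $G_1$ and $G_2$ to be disjoint closed subsets of $Z$ whose union is $Z$; one must be careful that "support" here means the set-theoretic support of the cohomology sheaves, that for a perfect complex this is closed and behaves well under localization and residue-field base change, and that $\Hom(G_1,G_2)=\Hom(G_2,G_1)=0$ (plus the analogous vanishing after all shifts, which is part of the definition of orthogonal decomposition applied also to shifts of $G_i$) genuinely translates into the stalks at a common point being incompatible. A clean way to phrase this: for $x\in Z$, localize at $x$; then $(G_1)_x$ and $(G_2)_x$ are perfect complexes over the local ring $\ko_{X,x}$ with $\RHom((G_1)_x,(G_2)_x)=0$, and a nonzero perfect complex over a local ring always admits a nonzero self-map (e.g.\ the identity survives to the residue field via Nakayama on the minimal free resolution), so $\RHom(P,P)\ne0$; applying this with $P=(G_1)_x\oplus(G_2)_x=G_x$ and using vanishing of the cross terms shows $G_x\ne0$ implies exactly one of $(G_1)_x,(G_2)_x$ is nonzero. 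Everything else is formal: assembling the support decomposition, invoking connectedness of $Z$, and transporting the conclusion among $\Dp_Z(X)$, $\Db_Z(X)$, $\D_Z(X)$ via Lemma \ref{lem:dec} and passage to compact objects.
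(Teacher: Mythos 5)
Your reduction and your main step both have problems. The smaller one first: $\Db_Z(X)$ is \emph{not} the idempotent completion of $\Dp_Z(X)$ --- the latter is already idempotent complete, being the category of compact objects of the compactly generated $\D_Z(X)$, and $\Db_Z(X)$ is strictly larger than $\Dp_Z(X)$ whenever $X$ is singular. So your proposed transfer of indecomposability from $\Dp_Z(X)$ ``sideways'' to $\Db_Z(X)$ does not work as stated; you would instead have to show that a nontrivial orthogonal decomposition of $\Db_Z(X)$ restricts to a nontrivial one of the thick subcategory $\Dp_Z(X)$, which needs an extra argument (every nonzero object of $\Db_Z(X)$ must receive a nonzero shifted map from a nonzero perfect complex supported on $Z$) that you do not supply.

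The serious gap is the claim that complete orthogonality of $\cat{S}_1$ and $\cat{S}_2$ forces $\RHom((G_1)_x,(G_2)_x)=0$ for every $x$, hence disjointness of supports. Orthogonality only gives vanishing of the \emph{global} groups $\Hom(G_1,\sh[i]{G_2})=H^i\bigl(X,\sHom(G_1,G_2)\bigr)$, i.e.\ acyclicity of the hypercohomology of the sheaf-$\RHom$, and this does not imply vanishing of its stalks or derived fibres: on $\PP^1$ the perfect complex $\sHom(\ko,\ko(-1))=\ko(-1)$ has no hypercohomology yet is nonzero at every point. To pass from global to pointwise vanishing you would need $\Hom(G_1,\sh[i]{G_2\otimes P})=0$ for all perfect $P$, i.e.\ that the factors $\cat{S}_i$ are tensor ideals --- which is not part of the definition of an orthogonal decomposition and is essentially equivalent to what you are trying to prove. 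The paper avoids this entirely: by Lemma \ref{lem:dec} a decomposition of $\Dp_Z(X)$ spreads out to one of $\D_Z(X)$, where the object $\ko_Z$ is available; it is indecomposable because $\End(\ko_Z)=\Gamma(Z,\ko_Z)$ has no nontrivial idempotents ($Z$ being connected), so it lies in one factor, drags every skyscraper $\ko_z$ into that factor via the nonzero maps $\ko_Z\to\ko_z$, and a Nakayama argument then produces a nonzero map from any nonzero perfect complex to some $\ko_z$, forcing the other factor to vanish. If you insist on staying inside $\Dp_Z(X)$, you must find a substitute for this step; the mere existence of a classical generator $G$ gives you no control over the supports of its two summands.
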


\begin{proof}
By Lemma \ref{lem:dec} and Proposition \ref{prop:catcomp}, a non-trivial orthogonal decomposition of $\Dp_Z(X)$ induces a non-trivial orthogonal decomposition of $\D_Z(X).$ So it is enough to show that the latter category and $\Db_Z(X)$ are indecomposable. As the proof for these two categories is the same, we will deal only with $\D_Z(X).$

Hence assume that there exists an orthogonal decomposition $\D_Z(X)=\cat{S}_1\ortdec\cat{S}_2.$
Following the strategy in \cite[Example 3.2]{Br}, consider the structure sheaf $\ko_Z$ of the subscheme $Z\subseteq X.$ Since $Z$ is connected, the object $\ko_Z$ is indecomposable in $\D_Z(X)$ and thus it belongs to one of the categories $\cat{S}_i,$ for $i=1,2.$ Without loss of generality, let it belong to $\cat{S}_1.$

For any closed point $z\in Z,$ there is a non-trivial morphism
$\ko_Z\to\ko_z.$ Thus $\ko_z\in\cat{S}_1,$ for all closed point $z\in
Z.$ Finally, consider a perfect complex $\ka\in\Dp_Z(X).$ Take an
affine open subset $U\iso\Spec(A)\subseteq X$ such that the
restriction of $\ka$ to $U$ is a non-trivial object. By definition,
$\ka\rest{U}$ is isomorphic in $\D(U)$ to an object $\kp$
corresponding to a bounded complex of
finitely generated projective $A$-modules $P.$ Set $i$ such that
$H^i(P)$ is the greatest non-trivial cohomology of $P.$ Then
$H^i(P)$ is a finitely generated $A$-module and, by Nakayama's
lemma, there is a non-trivial map $H^i(\kp)\to\ko_z,$ for a closed
point $z\in Z.$ This induces a non-trivial map $\kp\to\ko_z$ and
therefore all perfect complexes belong to $\cat{S}_1.$ This implies
that $\cat{S}_1$ coincides with $\D_Z(X).$
\end{proof}

This result, combined with Corollary \ref{cor:geo}, gives Theorem \ref{thm:main}.

\section{A counterexample}\label{sec:counter}

In this section we provide an example of a full exact and non-trivial
functor $\fun{F}:\cat{T}_1\to\cat{T}_2$ between triangulated
categories such that $\cat{T}_1$ is indecomposable and $\fun{F}$ is
not faithful.

To this end, let $A$ be a commutative algebra over a field $\K$ with
generators $x_1, x_2, \dots$ and with relations $x_j x_i=x_i$ for
$i<j.$ Let $\Mod{A}$ be the category of right $A$-modules and set
$\D(A):=\D(\Mod{A}).$ Denote by $\Dp(A)$ the full subcategory of
$\D(A)$ of perfect complexes, i.e.\ the smallest thick subcategory of
$\D(A)$ containing $A.$

\begin{lem}\label{lem:count1}
The triangulated category $\Dp(A)$ is indecomposable.
\end{lem}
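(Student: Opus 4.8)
The plan is to show that $\Dp(A)$ is indecomposable by exhibiting an object which is indecomposable and to which every other object of $\Dp(A)$ is ``connected'' via a nonzero morphism, exactly in the spirit of the proof of Corollary \ref{cor:orth}. The natural candidate is $A$ itself, viewed as an object of $\Dp(A)$. So suppose $\Dp(A)=\cat{S}_1\ortdec\cat{S}_2$ is an orthogonal decomposition. The first step is to check that $A$ is an indecomposable object of $\Dp(A)$; equivalently that the ring $\End_{\D(A)}(A)\iso A$ has no nontrivial idempotents. This is a direct computation with the presentation of $A$: an element of $A$ is a $\K$-linear combination of monomials in the $x_i$, and the relations $x_jx_i=x_i$ for $i<j$ collapse every monomial to a single variable $x_i$ (the one of smallest index occurring) or to a scalar; one then sees the only idempotents are $0$ and $1$. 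Hence $A\in\cat{S}_1$ (say).

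The second step is to propagate membership in $\cat{S}_1$ from $A$ to all of $\Dp(A)$. Here $A$ being compact and $\cat{S}_1$ being a thick subcategory, the subcategory $\rort{A}\cap\Dp(A)$ contains $\cat{S}_2$, so it suffices to produce, for every nonzero $\kp\in\Dp(A)$, a nonzero morphism $A\to\kp$ (up to shift), which then forces $\kp\notin\cat{S}_2$ and, since $\cat{S}_1$ is thick and everything decomposes, $\kp\in\cat{S}_1$. A perfect complex $\kp$ over the ring $A$ is quasi-isomorphic to a bounded complex $P$ of finitely generated projective $A$-modules; if $\kp\neq0$ pick the top nonvanishing cohomology $H^i(P)$, which is a finitely generated nonzero $A$-module. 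The point is then to find a nonzero $A$-module map $A\to H^i(P)$, i.e.\ a nonzero element of $H^i(P)$ annihilated by... nothing in particular — any nonzero element $v\in H^i(P)$ gives a map $A\to H^i(P)$, $1\mapsto v$, which is nonzero. This produces a nonzero morphism in $\D(A)$ from $A$ to $\sh[-i]{H^i(P)}$, and since $i$ is the top cohomology this lifts to a nonzero morphism $A\to\sh[-i]{\kp}$ in $\D(A)$ (the truncation map $\kp\to\sh[-i]{H^i(\kp)}$ is a nonzero morphism and precomposition keeps it nonzero for a suitable choice). Thus $\kp\in\cat{S}_1$, and since $\kp$ was arbitrary, $\cat{S}_1=\Dp(A)$, $\cat{S}_2=0$, proving indecomposability.

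Alternatively, and perhaps more cleanly, one can invoke Lemma \ref{lem:dec} together with Proposition \ref{prop:catcomp}: a nontrivial orthogonal decomposition of $\Dp(A)=\D(A)^c$ would extend to one of $\D(A)$, so it is enough to rule out nontrivial decompositions of $\D(A)$; but $\D(A)$ is the derived category of a connected ring (no nontrivial idempotents), and it is standard that such a category is indecomposable — any decomposition would split the unit object $A$. I would present the argument in whichever of these two forms reads most smoothly alongside the counterexample that follows.

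\textbf{Main obstacle.} The one genuinely ring-theoretic point, and the place where the specific presentation of $A$ matters, is verifying that $A$ has no nontrivial idempotents, i.e.\ that $A$ is connected as a ring. Everything else is a routine transcription of the proof of Corollary \ref{cor:orth} from the geometric setting to the affine-ring setting. So the computation with monomials in the $x_i$ modulo the relations $x_jx_i=x_i$ — understanding the multiplicative structure of $A$ well enough to list its idempotents — is the step that needs care; once that is in hand the indecomposability of $\Dp(A)$ follows formally.
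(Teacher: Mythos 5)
Your proposal is correct and follows essentially the same route as the paper, which simply observes that $\Dp(A)\iso\Dp(\Spec(A))$ and invokes Corollary \ref{cor:orth} for the connected quasi-compact scheme $\Spec(A)$, so that everything reduces (exactly as in your sketch) to checking that $A$ has no non-trivial idempotents; your first variant merely inlines the proof of Corollary \ref{cor:orth} in the affine case, where it simplifies further since $\Hom_{\D(A)}(A,\kp[i])\iso H^i(\kp)$ makes Nakayama unnecessary. One small correction to the step you flag as delicate: the relations $x_jx_i=x_i$ for $i<j$ do not collapse a monomial to a single variable but to a power $x_i^m$ of the smallest-index variable occurring (e.g.\ $x_1^2\neq x_1$), so a $\K$-basis of $A$ is $\{1\}\cup\{x_i^m\}_{i\geq 1,\,m\geq 1}$; the idempotent computation still goes through (reduce modulo $I$ to arrange $e\in I$, then look at the component of $e^2$ supported on the largest index present).
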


\begin{proof}
Obviously $\Dp(A)\iso\Dp(\Spec(A)).$ By Corollary \ref{cor:orth}, the result follows once we know that $\Spec(A)$ is connected. This, in turn, is equivalent to showing that $A$ does not contain non-trivial idempotents. But this is an easy exercise using the definition of the algebra $A.$
\end{proof}

Denote by $I$ the ideal generated by all $x_i,$ so that
$A/I\iso\K.$ Consider the functor
\[
\fun{G}:\D(A)\longrightarrow\D(\K),\qquad X\longmapsto X\stackrel{\mathbf L}{\otimes}_A\K
\]
and set $\fun{F}:=\fun{G}\rest{\Dp(A)}:\Dp(A)\to\D(\K).$

\begin{lem}\label{lem:count2}
The functor $\fun{F}$ is full.
\end{lem}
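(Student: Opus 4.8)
The plan is to compute $\fun{F}$ explicitly on the generator $A$ of $\Dp(A)$ and to analyze morphisms between shifts of perfect complexes. First I would observe that $\fun{F}(A)=A\Lotimes_A\K=\K$ sits in degree $0$, so $\fun{F}$ sends the free module $A$ to $\K\in\D(\K)$. Since $\Dp(A)$ is classically generated by $A$, every object of $\Dp(A)$ is obtained from $A$ by finitely many shifts, cones and direct summands, and hence $\fun{F}$ takes values in the subcategory of $\D(\K)$ generated by $\K$, i.e.\ in bounded complexes of finite-dimensional $\K$-vector spaces. The key point will be that $\RHom_{\D(A)}(A,A[i])=H^i(A)$ which is $A$ for $i=0$ and $0$ otherwise, while $\RHom_{\D(\K)}(\fun{F}(A),\fun{F}(A)[i])=\Hom_\K(\K,\K[i])$ is $\K$ for $i=0$ and $0$ otherwise; so on the generator the map on $\Hom$'s is the augmentation $A\to A/I=\K$, which is \emph{surjective}. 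Fullness on the generator together with exactness is what we need to bootstrap.

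The main step is to deduce fullness on all of $\Dp(A)$ from surjectivity of $\fun{F}\colon\Hom_{\Dp(A)}(A[i],A[j])\to\Hom_{\D(\K)}(\fun{F}(A)[i],\fun{F}(A)[j])$. I would argue as follows: let $\cat{C}\subseteq\Dp(A)$ be the full subcategory of those objects $C$ such that $\fun{F}\colon\Hom_{\Dp(A)}(C,A[j])\to\Hom_{\D(\K)}(\fun{F}(C),\fun{F}(A)[j])$ is surjective for all $j$. This subcategory contains $A$ and all its shifts; a five-lemma argument applied to the long exact $\Hom$-sequences coming from a distinguished triangle shows $\cat{C}$ is closed under cones and shifts (surjectivity of the outer terms forces surjectivity in the middle, using that $\fun{F}$ is exact so it sends triangles to triangles), and it is closed under direct summands since surjectivity of a map of the form $f\oplus g$ forces surjectivity of $f$. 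Hence $\cat{C}=\Dp(A)$. Then one repeats the same dévissage in the first variable: fix an arbitrary $D\in\Dp(A)$ and let $\cat{C}'$ be the full subcategory of $C$ with $\fun{F}\colon\Hom(C,D)\to\Hom(\fun{F}(C),\fun{F}(D))$ surjective; by the previous paragraph $\cat{C}'$ contains all shifts of $A$ (taking $D$ to be an arbitrary object, one first needs surjectivity with target $A[j]$, which is the case just done, then bootstraps in the second variable by the analogous triangle/summand argument), and the same closure properties give $\cat{C}'=\Dp(A)$, i.e.\ $\fun{F}$ is full.

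To make the bootstrapping clean I would actually organize it as a single statement: for a triangulated subcategory $\cat{S}$ classically generated by an object $G$, an exact functor $\fun{F}$ is full as soon as $\fun{F}\colon\Hom_{\cat{S}}(G[i],G[j])\to\Hom(\fun{F}(G)[i],\fun{F}(G)[j])$ is surjective for all $i,j$ — the proof being exactly the double dévissage sketched above, using the long exact sequences and the five lemma in each variable separately, plus closure under summands. The hard part, or at least the only part requiring care, is checking that surjectivity propagates through cones: in the relevant piece of the long exact sequence one has $\Hom(C',A[j])\to\Hom(C,A[j])\to\Hom(C'',A[j-1])$ mapping to the analogous sequence downstairs, and one must verify that the connecting maps are compatible (they are, by naturality of the long exact sequence and exactness of $\fun{F}$) so that the five lemma yields surjectivity in the middle from surjectivity on the two sides. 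Everything else — the computation $\fun{F}(A)=\K$, the identification of $\End$'s, and the surjectivity of the augmentation $A\to\K$ — is immediate.
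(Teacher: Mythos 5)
Your argument has a fatal gap at the step ``surjectivity of the outer terms forces surjectivity in the middle.'' The four/five lemma for epimorphisms does not work that way: in the commutative ladder of long exact sequences
\[
\Hom(C'',X)\to\Hom(C,X)\to\Hom(C',X)\to\Hom(\sh[-1]{C''},X)
\]
mapping to the corresponding sequence downstairs, to conclude surjectivity of the middle map $\Hom(C,X)\to\Hom(\fun{F}(C),\fun{F}(X))$ you need surjectivity on the first and third terms \emph{and injectivity on the fourth}. This is exactly why the standard d\'evissage criterion proves ``fully faithful on a classical generator implies fully faithful on its thick closure,'' but fullness alone does not propagate through cones. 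In the present example the required injectivity fails from the start: the map $\End(A)=A\to\End(\K)=\K$ is the (non-injective) augmentation. So the bootstrap cannot even get off the ground.

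The gap is not merely presentational: your argument uses nothing about $A$ beyond the surjectivity of the augmentation $A\to\K$, so it would apply verbatim to $A=\K[x]$ with augmentation $x\mapsto0$. But $-\lotimes_{\K[x]}\K$ is \emph{not} full on $\Dp(\K[x])$: for $P=[\K[x]\xrightarrow{\,x\,}\K[x]]\simeq\K[x]/(x)$ one has $\fun{F}(P)\iso\K\oplus\sh{\K}$, whose endomorphism ring is two-dimensional, while $\End_{\D(\K[x])}(P)\iso\K$. Hence any correct proof must exploit the specific relations $x_jx_i=x_i$. The paper's proof does exactly this: it reduces to morphisms between $A$ and a bounded complex $\cp{Q}$ of finite free modules, lifts a given map $\K\to Q^0\otimes_A\K$ (killed by $d^0\otimes\K$) to $h_1\colon A\to Q^0$, and then corrects it to $h'_1=(1-x_n)\comp h_1$ for $n\gg0$, using that $(d^0\comp h_1)(1)\in I^m$ is fixed by multiplication by $x_n$; this forces $d^0\comp h'_1=0$ on the nose while leaving the induced map after $\otimes_A\K$ unchanged. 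That correction step, which has no analogue in your outline, is the actual content of the lemma.
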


\begin{proof}
It is easy to see that the result follows if we prove that the morphisms
\begin{equation}\label{eqn:count1}
\begin{split}
&\Hom_A(A,P)\longrightarrow\Hom_\K(\fun{F}(A),\fun{F}(P))=\Hom_\K(\K,P\stackrel{\mathbf L}{\otimes}_A\K)\\
&\Hom_A(P,A)\longrightarrow\Hom_\K(\fun{F}(P),\fun{F}(A))=\Hom_\K(P\stackrel{\mathbf L}{\otimes}_A\K,\K)
\end{split}
\end{equation}
are surjective, for any $P\in\Dp(A).$ Any perfect complex $P$ is
a direct summand in $\Dp(A)$ of a bounded complex of finitely generated free $A$-modules.
Hence, it is sufficient to consider the case when $P$ itself is quasi-isomorphic to a bounded complex
\[
\cp{Q}=\{Q^t\stackrel{d^{t}}{\longrightarrow}\cdots\longrightarrow Q^{-1}\stackrel{d^{-1}}{\longrightarrow} Q^{0}\stackrel{d^{0}}{\longrightarrow} Q^{1}\stackrel{d^{1}}{\longrightarrow}\cdots\stackrel{d^{s-1}}{\longrightarrow} Q^s\}
\]
of finitely generated free $A$-modules.

Take a morphism $f_1:\K\to Q^0\otimes_A\K$ such that the
composition $(d^0\otimes\K)\comp f_1$ is trivial. Composing with
$A\to\K,$ the morphism $f_1$ induces a map $g_1:A\to Q^0\otimes_A\K$
which, in turn, lifts to $h_1:A\to Q^0.$ Now the element $(d^0\comp
h_1)(1)\in Q^1\iso A^m$ is in $I^m$ and $x_n(d^0\comp
h_1)(1)=(d^0\comp h_1)(1),$ for a sufficiently large $n.$ So setting
$h'_1:=(1-x_n)\comp h_1,$ we get $d^0\comp h'_1(1)=0$ and
$\fun{F}(h'_1)=f_1.$ In particular, the first morphism in
\eqref{eqn:count1} is surjective.

Similarly, to deal with the second morphism in \eqref{eqn:count1}, let
$f_2:Q^0\otimes_A\K\to\K$ be a morphism such that the composition
$f_2\comp(d^{-1}\otimes\K)$ is trivial. Again, composing with the
natural morphism $Q^0=Q^0\otimes_A A\to Q^0\otimes_A\K,$ we get a
morphism $g_2:Q^0\to\K$ which lifts to a morphism $h_2:Q^0\to A.$ For
very large $n,$ define $h'_2:=(1-x_n)\comp h_2$ so that, again,
$h'_2\comp d^{-1}(a_j)=0,$ for all $a_j$ in the set of generators
$a_1,\ldots a_r$ of $Q^{-1}.$ Then $\fun{F}(h'_2)=f_2$ and this
concludes the proof.
\end{proof}

To prove that $\fun{F}$ is not faithful, consider the
non-trivial morphism $x_i:A\to A,$ for $i$ any positive integer. On
the other hand, the morphism $\fun{F}(x_i):\K\to\K$ is the trivial morphism.

%%%%%%%%%%%%%%%%%%%%%%%%%%%%%%%%%%%%%

\bigskip

{\small\noindent {\bf Acknowledgements.}
D.O.\ thanks Simons Center for Geometry and Physics for  hospitality and stimulating atmosphere. Part of this article was written while P.S.\ was visiting the Institut Henri Poincar\'e in Paris whose warm hospitality is gratefully acknowledged. P.S.\ would like to dedicate the paper to his wife Anna and to his daughter Giulia who came to light during the final write-up of the article.}

%%%%%%%%%%%%%%%%%%%%%%%%%%%%%%%%%%%%%

\end{document}